%
%
%
%




\documentclass[a4paper,12pt]{amsart}
\usepackage[T1]{fontenc}
 \usepackage{graphicx}
\usepackage{amsmath,amssymb,mathrsfs}
\usepackage{latexsym}
\usepackage{amsfonts}
\usepackage[latin1]{inputenc}
\usepackage{amssymb,amsbsy,amsmath,amsfonts,amscd}

\addtolength{\footskip}{.3in}
\addtolength{\oddsidemargin}{-.5in}
\addtolength{\evensidemargin}{-.5in}
\addtolength{\textwidth}{1in}

\allowdisplaybreaks[1]

\newtheorem{thm}{Theorem}[section]
\newtheorem{rema}{Remark}[section]

\newtheorem{cor}[thm]{Corollary}
\newtheorem{lem}[thm]{Lemma}
\newtheorem{prop}[thm]{Proposition}
\newtheorem{conj}[thm]{Conjecture}
\newtheorem{defn}[thm]{Definition}
\theoremstyle{definition}

\numberwithin{equation}{section}


\newcommand{\C}{\mathbb C}

\newcommand{\R}{\mathbb R}

\renewcommand{\b}{\beta}




\newcommand{\frenchresumename}{R\'esum\'e}

\newcommand{\resumename}{Abstract}

\everymath{\displaystyle}


\keywords{overalgebra almost separating, generic coadjoint orbits, semi-simple and deployed Lie algebra }

\subjclass{37J15, 22E45, 22E27, 22D30}

\thanks{This work was supported by The Hubert Curien-Utique contracts 06/S1502 and 09/G 1502.\\
I thank my professor Didier Arnal for his aid an i thank the University of Bourgogne for its hospitality during my stay in France.}

\title[Overalgebras and separation of generic coadjoint orbits of $SL(n, \R)$]{Overalgebras and separation of generic coadjoint orbits of $SL(n, \R)$}

\author[ A. Zergane]{ Amel Zergane $^{\star}$ }

\address{$^{\star}$
Institut de Math\'ematiques de Bourgogne\\
UMR CNRS 5584\\
Universit\'e de Bourgogne\\
U.F.R. Sciences et Techniques
B.P. 47870\\
F-21078 Dijon Cedex \\France}
\address{$^{\star}$ Laboratoire de Math\'ematique Physique, Fonctions Sp\'eciales et Applications, Universit\'e de Sousse\\
Ecole Sup\'erieure des Sciences et de Technologie de Hammam-Sousse\\
Rue Lamine Abassi 4011 H. Sousse\\
Tunisie} \email{amel.zergane@u-bourgogne.fr}
\email{amel.zergan@yahoo.fr}


\begin{document}


\begin{abstract}
For the semi simple and deployed Lie algebra $\mathfrak g=\mathfrak{sl}(n, \R)$, we give an explicit construction of an overalgebra $\mathfrak g^+=\mathfrak g\rtimes V$ of $\mathfrak g$, where $V$ is a finite dimensional vector space. In such a setup, we prove the existence of a map $\Phi$ from the dual $\mathfrak g^\star$ of $\mathfrak g$ into the dual $(\mathfrak g^+)^\star$ of $\mathfrak g^+$ such that the coadjoint orbits of $\Phi(\xi)$, for generic $\xi$ in $\mathfrak g^\star$, have a distinct closed convex hulls. Therefore, these closed convex hulls separate 'almost' the generic coadjoint orbits of $G$.

\end{abstract}
           

\maketitle

\section{Introduction}


In this paper, we prove that, for $n>2$, the Lie algebra $\mathfrak g=\mathfrak{sl}(n,\R)$ admits an overalgebra almost separating of dgree $n$, but $\mathfrak g$ does not admit an overalgebra of degree $2$. More precisely:

There exist a Lie overalgebra $\mathfrak g^+=\mathfrak{sl}(n,\R)\rtimes V$ and an application $\Phi$ of degree $n$, $\Phi: \mathfrak g^\star\longrightarrow\mathfrak g^{+\star}$ such that:

\begin{itemize}
\item[1.] $p\circ\Phi=id_{\mathfrak g^\star}$, where $p$ is the canonical projection $p:\mathfrak g^{+\star}\longrightarrow\mathfrak g^\star$,

\item[2.] $\Phi(Coad(SL(n,\R))\xi)=Coad(G^+)\Phi(\xi)$,
\item[3.] if $\xi$ is generic, then $\overline{Conv}\left(\Phi(Coad(SL(n,\R)\xi))\right)=\overline{Conv}\left(\Phi(Coad(SL(n,\R))\xi')\right)$ if and only if $Coad(SL(n,\R))\xi'$ belongs to a finite set of coadjoint orbits of $\mathfrak{sl}(n,\R)$ (here: a singleton if $n$ is odd, a singleton or a set of two elements if $n$ is even).
\end{itemize}

We identify $(\mathfrak g^+)^\star$ the dual of $\mathfrak g^+$ with the space $\mathfrak g^\star\oplus V^\star$. The condition 1. means $\Phi(\xi)=\xi+\phi(\xi)$, where $\phi$ is a polynomial of degree $n$ from $\mathfrak g^\star$ to $V^\star$. We say that $(\mathfrak g^+,\phi)$ is an overalgebra almost separating of $\mathfrak g$ (of degree $n$).

But there is no separating overalgebra of degree 2, $(\mathfrak g^+_2,\phi)$, i.e there is neither a Lie overalgebra $\mathfrak g_2^+=\mathfrak{sl}(n,\R)\rtimes V_2$ nor $\phi:\mathfrak g^\star\longrightarrow V^{\star}_2$ of degree 2 such that :
\begin{itemize}
\item[1.] $p\circ\Phi=id_{\mathfrak g^\star}$, if $p$ is the canonical projection $p:\mathfrak g^{+\star}_2\longrightarrow\mathfrak g^\star$,
\item[2.] $\Phi(Coad(SL(n,\R))\xi)=Coad(G^+)\Phi(\xi)$,
\item[3.] if $\xi$ is generic then, $\overline{Conv}\left(\Phi(Coad(SL(n,\R)\xi))\right)=\overline{Conv}\left(\Phi(Coad(SL(n,\R))\xi')\right)$ if and only if $Coad(SL(n,\R))\xi'$ belongs to a finite family of coadjoint orbits.
\end{itemize}

Finally, we show that $\mathfrak{sl}(4,\R)$ does not admit an overalgebra almost separating of degree 3.\\


\section{Description of orbits of $\mathfrak{sl}(n,\R)$}


\subsection{Invariant functions of $\mathfrak{sl}(n,\R)$}

 
\

Since the Lie algebra  $\mathfrak{sl}(n,\R)$ is simple, we can identify the adjoint action and coadjoint action of the Lie group $G=SL(n,\R)$. More precisely, we consider the non degenerate  bilinear invariant form on $\mathfrak{sl}(n,\R)$ defined by:
$$
\langle X,Y\rangle=Tr(XY).
$$
Denote $\xi$ an element of the dual of $\mathfrak{sl}(n,\R)$ and $X$ an element of $\mathfrak{sl}(n,\R)$.\\
The functions defined on the dual $\mathfrak g^*=\mathfrak{sl}(n,\R)$ of the Lie algebra $\mathfrak g $ by:
$$
T_k(\xi)=Tr(\xi^k),\quad 2\leq k \leq n
$$
are invariant: $T_k(g\xi g^{-1})=T_k(\xi)$ for all $\xi$ and all $g$.\\ 

The ring of the polynomial invariant functions on $\C$ is $\C[T_2,\dots, T_n]$ ({\sl{cf.}} \cite{W}).\\


\subsection{Description of $\Omega=\{\xi\in\mathfrak{sl}(n,\R),\#Sp(\xi)=n\}$}

\


\

This is classic. We recall this only for completeness.

Denote $\Omega$ the set of matrices $\xi$ of $\mathfrak{sl}(n,\R)$ which are diagonalizable on $\C$ and have $n$ distinct eigenvalues. The spectrum of this matrix $\xi$ is 
$$
Sp(\xi)=\{c_1,\dots,c_r,a_1\pm ib_1,\dots,a_s\pm ib_s,\quad\text{with}~~c_i,~a_j,~b_j\in\R,~~b_j>0,~~r+2s=n\}.
$$

Denote by $(a,b)<(a',b')$ the lexicographic order :
$$
(a,b)<(a',b')~\Longleftrightarrow~\left\{\aligned &a<a'\\ \text{or}&\\ &a=a'~\text{ and }~b<b'.\endaligned\right.
$$
We note also (the same if the eigenvalues are not stored in lexicographic order ) :
$$
D(c_1,\dots,c_r,a_1+ib_1,\dots,a_s+ib_s)=\left(\begin{matrix}
c_1&&&&&&&\\
&\ddots&&&&&&\\
&&c_r&&&&&&\\
&&&a_1&b_1&&&\\
&&&-b_1&a_1&&&\\
&&&&&\ddots&&\\
&&&&&&a_s&b_s\\
&&&&&&-b_s&a_s\\
\end{matrix}\right).
$$

We fix $r$ and $s$ such that $r+2s=n$. If $r>0$, we put :
$$
\Sigma_{r,s}=\{D(c_j,a_k+ib_k),~~c_1<c_2<\dots<c_r,~~b_k>0,~~(a_1,b_1)<(a_2,b_2)<\dots<(a_s,b_s)\} 
$$
If $r=0$, we note :
$$
\Sigma^+_{0,s}=\{D(a_k+ib_k),~~b_k>0,~~(a_1,b_1)<(a_2,b_2)<\dots<(a_s,b_s)\} 
$$
and
$$
\Sigma^-_{0,s}=\{D^-(a_k+ib_k),~~b_k>0,~~(a_1,b_1)<(a_2,b_2)<\dots<(a_s,b_s)\},
$$
where
$$
D^-(a_1+ib_1,\dots,a_s+ib_s)=\left(\begin{matrix}
a_1&-b_1&&&&&\\
b_1&a_1&&&&&\\
&&a_2&b_2&&&\\
&&-b_2&a_2&&\\
&&&&\ddots&&\\
&&&&&a_s&b_s\\
&&&&&-b_s&a_s\\
\end{matrix}\right).
$$
Finally, we put
$$
\Sigma=\left\{\aligned&\bigcup_{r>0,~r+2s=n}\Sigma_{r,s}\bigcup\left(\Sigma^+_{0,\frac{n}{2}}\cup \Sigma^-_{0,\frac{n}{2}}\right)&~~\text{ if }~~n~\text{ is even}\\
&\bigcup_{r>0,~r+2s=n}\Sigma_{r,s}&~~\text{ if }~~n~\text{ is odd}.\endaligned\right.
$$

The set $\Omega$ is invariant since the spectrum $Sp(\xi)$ of $\xi$ coincides with the spectrum of $g\xi g^{-1}=Ad(g)(\xi)$  ($g\in SL(n,\R)$), or if we prefer, if $C_\xi$ is the characteristic polynomial of the matrix $\xi$, then the adjoint orbit $G\cdot\xi_0$ of $\xi_0$ is included in $\{\xi,~~\text{such that}~~C_\xi=C_{\xi_0}\}$.


\subsection{Adjoint orbits in $\Omega$}

\


\

\begin{lem}
\

For all matrix $\xi$ in $\Omega$, the adjoint orbit $G\cdot\xi$ of the matrix $\xi$ contains a point of $\Sigma$.
\end{lem}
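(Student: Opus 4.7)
The strategy is to construct explicitly, for each $\xi\in\Omega$, a real basis of $\R^n$ in which $\xi$ takes one of the block-diagonal forms of $\Sigma$, and then to verify that the change-of-basis matrix can be chosen with determinant $1$.

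Since $\xi\in\Omega$ is real and diagonalizable over $\C$ with $n$ distinct eigenvalues, its spectrum is of the form $\{c_1,\dots,c_r,\, a_1\pm ib_1,\dots,a_s\pm ib_s\}$ with $c_i,a_j\in\R$, $b_j>0$ and $r+2s=n$. For each real eigenvalue $c_i$, I would pick a real eigenvector $e_i\in\R^n$. For each pair $a_j\pm ib_j$, I would pick a complex eigenvector $v_j=u_j+iw_j\in\C^n$ of $\xi$ for $a_j+ib_j$, with $u_j,w_j\in\R^n$. Separating real and imaginary parts of $\xi v_j=(a_j+ib_j)v_j$ gives $\xi u_j=a_ju_j-b_jw_j$ and $\xi w_j=b_ju_j+a_jw_j$, and shows that $u_j,w_j$ are $\R$-linearly independent (otherwise the one-dimensional complex eigenspace at $a_j+ib_j$ would be real, contradicting $b_j>0$). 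Hence, in the basis $(u_j,w_j)$ the restriction of $\xi$ to $\R u_j\oplus\R w_j$ is the $2\times 2$ block $\left(\begin{smallmatrix} a_j & b_j\\ -b_j & a_j\end{smallmatrix}\right)$.

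After reordering the $e_i$'s so that $c_1<\dots<c_r$, and reordering the pairs so that $(a_1,b_1)<\dots<(a_s,b_s)$ lexicographically, the family $B=(e_1,\dots,e_r,u_1,w_1,\dots,u_s,w_s)$ is a basis of $\R^n$ in which the matrix of $\xi$ is precisely $D(c_1,\dots,c_r,a_1+ib_1,\dots,a_s+ib_s)$. Let $g_0\in GL(n,\R)$ denote the change-of-basis matrix from the canonical basis to $B$; it remains to adjust $B$ so that $\det g_0=1$. If $r>0$, rescaling $e_1$ by $(\det g_0)^{-1}$ preserves the block form and yields $\det g_0=1$, so $G\cdot\xi$ meets $\Sigma_{r,s}\subset\Sigma$. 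If $r=0$ (so $n$ is even), the rescaling $(u_j,w_j)\mapsto(\lambda u_j,\lambda w_j)$ preserves the block form and multiplies $\det g_0$ by $\lambda^2>0$, so one can reach $\det g_0=1$ whenever $\det g_0>0$; in that case $G\cdot\xi$ meets $\Sigma^+_{0,s}$. If $\det g_0<0$, I would instead apply the swap $(u_1,w_1)\mapsto(w_1,u_1)$, which multiplies $\det g_0$ by $-1$ and (by the same real/imaginary computation) turns the first block into $\left(\begin{smallmatrix} a_1 & -b_1\\ b_1 & a_1\end{smallmatrix}\right)$ while leaving the others intact; a further positive rescaling then gives $\det g_0=1$, and $G\cdot\xi$ meets $\Sigma^-_{0,s}$.

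The main obstacle is precisely the case $r=0$ with $n$ even: the subgroup of $GL(n,\R)$ commuting with a standard block matrix $D(a_j+ib_j)$ has two connected components distinguished by the sign of the determinant, which is why $\Sigma$ had to be defined as the union of the two families $\Sigma^+_{0,n/2}$ and $\Sigma^-_{0,n/2}$. The swap on the first block is what guarantees that every $SL(n,\R)$-orbit hits one of them.
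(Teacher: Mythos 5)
Your proof is correct and follows essentially the same route as the paper: build a real basis from real eigenvectors and from the real and imaginary parts of complex eigenvectors to reach the normal form $D(\cdot)$, then fix the determinant of the change-of-basis matrix by a rescaling (for $r>0$) or, when $r=0$ and the determinant is negative, by swapping the two vectors of the first complex pair to land on $D^-(\cdot)$. The only difference is cosmetic: you normalize the determinant by rescaling a single eigenvector or a conjugate pair, whereas the paper rescales the whole matrix by $(\det P)^{-1/n}$ after first correcting the sign; both are valid.
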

\begin{proof}
\

Let $\xi$ be a matrix in $\Omega$,  $\xi$ is diagonalizable on $\C$, with eigenvalues all distinct. If $Sp(\xi)=\{c_j,a_k\pm ib_k\}$, where the eigenvalues are ordered as above, $c_1<\dots<c_r$ and $(a_1,b_1)<\dots<(a_s,b_s)$, $b_1>0,\dots,b_s>0$, then there exist vectors $E_j\in\C^n$ and $F_k\in\C^n$ such that $\xi E_j=c_j E_j$ and $\xi F_k=(a_k+ib_k)F_k$.

Since $\xi$ is real, we can choose $E_j$ real (in $\R^n$) and if we put $F_k=E_{r+2k-1}+iE_{r+2k}$ ($E_{r+t}$ are real), then we obtain a basis of $\R^n$. If $P$ is the basis change matrix , then the matrix of $\xi$ is written in the new basis as follows : 
$$
\xi'=P\xi P^{-1}=D(c_1,\dots,c_r,a_1+ib_1,\dots,a_s+ib_s).
$$
\begin{itemize} 
\item[a.] If $\det P>0$, then there exists $g=\frac{1}{\sqrt[n]{\det P}}P$, such that $\xi'=g\xi g^{-1}$ and $g\in SL(n,\R)$. The adjoint orbit $G\cdot\xi$ of the matrix $\xi$ contains a point of $\Sigma$.
\item[b.] If $\det P<0$ and $r>0$, then we replace $E_1$ by $E'_1=-E_1$. The matrix $P$ becomes $P'=D(-1,1,\dots,1)P$, $P\xi P^{-1}=P'\xi P'^{-1}$ and $\det(P')>0$. As above, the adjoint orbit $G\cdot\xi$ of the matrix $\xi$ contains a point of $\Sigma$.
\item[c.] If $\det P<0$ and $r=0$, then we replace $E_1$ by $E'_1=E_2$ and $E_2$ by $E'_2=E_1$. The matrix $P$ becomes $P'=D(-1,1,\dots,1)P$, $P\xi P^{-1}=P'\xi P'^{-1}$ and $\det(P')>0$. The adjoint orbit $G\cdot\xi$ of the matrix $\xi$ contains the point $\xi'=P'\xi P'^{-1}=D^-(a_1+ib_1,\dots,a_s+ib_s)$ of $\Sigma$.
\end{itemize}
\end{proof}

\begin{lem}
\

$\Sigma$ is a section for the action of $SL(n,\R)$ in $\Omega$, i.e each orbit of $\Omega$ contains only a single point of $\Sigma$.
\end{lem}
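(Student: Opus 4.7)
The plan is to combine the spectrum-invariance of $SL(n,\R)$-conjugation with a parity-of-determinant argument. If $\xi,\xi'\in\Sigma$ lie in the same $SL(n,\R)$-orbit, they share the same characteristic polynomial, hence the same pair $(r,s)$ and the same ordered list of real eigenvalues and complex eigenvalue pairs. By construction, each of $\Sigma_{r,s}$ (for $r>0$), $\Sigma^+_{0,n/2}$, and $\Sigma^-_{0,n/2}$ contains at most one element with a prescribed spectrum, since the diagonal entries are filled in a prescribed order. This reduces everything to the remaining case: $n$ even, $r=0$, $\xi\in\Sigma^+_{0,n/2}$ and $\xi'\in\Sigma^-_{0,n/2}$ with the same spectrum; one must show these are not $SL(n,\R)$-conjugate.

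For that, I analyse the shape of a hypothetical conjugator $g\in SL(n,\R)$ with $g\xi g^{-1}=\xi'$. The two-dimensional real invariant subspace $V_k$ of $\xi$ associated to the eigenvalue pair $a_k\pm ib_k$ is uniquely determined as the real form of the sum of the two complex eigenspaces, and similarly for $\xi'$; since the pairs $(a_k,b_k)$ are all distinct and ordered in the same way in $\Sigma^+$ and $\Sigma^-$, both families coincide with the standard coordinate splitting $\R^n=\R^2\oplus\cdots\oplus\R^2$. The distinctness and ordering then force $g$ to send the $k$-th two-plane of $\xi$ to the $k$-th of $\xi'$, so $g$ is block diagonal with $2\times 2$ blocks $g_1,\dots,g_s$.

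Writing $J=\left(\begin{smallmatrix}0&1\\-1&0\end{smallmatrix}\right)$, each block of $\xi$ or $\xi'$ has the form $a_kI+b_kJ$ except the first block of $\xi'$, which is $a_1I-b_1J$. The first block equation therefore reduces to $g_1Jg_1^{-1}=-J$; a short computation forces $g_1=\left(\begin{smallmatrix}x&y\\y&-x\end{smallmatrix}\right)$, so $\det g_1=-(x^2+y^2)<0$. For each $k\geq 2$, $g_k$ commutes with $a_kI+b_kJ$, hence acts as multiplication by a nonzero complex number on $V_k\simeq\C$ and has strictly positive determinant. Multiplying yields $\det g<0$, contradicting $g\in SL(n,\R)$.

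The step I expect to require the most care is the reduction itself: recognising that the $\Sigma^+/\Sigma^-$ dichotomy encodes an orientation of the first two-plane that no element of $SL(n,\R)$ can reverse. Once this is in hand, both the block-diagonalisation from ordered distinct eigenvalue pairs and the sign-of-determinant computation are routine.
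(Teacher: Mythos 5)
Your proof is correct and takes essentially the same route as the paper: reduce to the case $n$ even, $r=0$, where a hypothetical $g\in SL(n,\R)$ conjugates $D^+(a_k+ib_k)$ into $D^-(a_k+ib_k)$, note that $g$ must preserve the eigenvalue-pair decomposition, and derive a contradiction from the sign of $\det g$. The only difference is bookkeeping: you stay with real $2\times 2$ blocks and the relation $g_1Jg_1^{-1}=-J$ forcing $\det g_1<0$, while the paper passes to the complexified eigenbasis, where $g$ has first block $\bigl(\begin{smallmatrix}0&\overline{z_1}\\ z_1&0\end{smallmatrix}\bigr)$ and diagonal blocks $(z_k,\overline{z_k})$, giving the same negative-determinant contradiction.
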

\begin{proof}

\

Let $\xi_0$ be an element in $\Sigma$ and $G\cdot\xi_0$ its orbit. If $\xi\in G\cdot\xi_0\cap\Sigma$, then,  since the spectrum of $\xi$ is the same as $\xi_0$ and the order of eigenvalues is fixed, we obtain:

If $r>0$ then $\xi=\xi_0$. If $r=0$, we have either $\xi=\xi_0$, or 
$$
G\cdot\xi_0\cap\Sigma=\{\xi,\xi_0\}=\{D^+(a_k+ib_k),D^-(a_k+ib_k)=gD^+(a_k+ib_k)g^{-1}\}.
$$
with $\det(g)=1$. 

In the latter case, the sub eigenspaces $Vec_\C(E_{2k-1}+iE_{2k})$ and $Vec_\C(E_{2k-1}-iE_{2k})$ are one dimensional. Thus, there exist a nonzero complex numbers $z_1,\dots,z_s$ such that : 
$$
g(E_1+iE_2)=z_1(E_1-iE_2),\quad\text{and}\quad g(E_{2k-1}+iE_{2k})=z_k(E_{2k-1}+iE_{2k}),\quad k>1.
$$
The matrix of $g$ is written in the basis of eigenvectors of the first matrix as follows :
$$
QgQ^{-1}=\left(\begin{matrix}0&\overline{z_1}&&&&\\
z_1&0&&&&&\\
&&z_2&&&&\\
&&&\overline{z_2}&&&\\
&&&&\ddots&&\\
&&&&&z_s&\\
&&&&&&\overline{z_s}\\
\end{matrix}\right).
$$
The determinant of the matrix $QgQ^{-1}$ is negative or zero, which is impossible. Therefore, $\xi=\xi_0$ and $\Sigma$ is a section for the action of $SL(n,\R)$ on $\Omega$.
\end{proof}

\begin{lem}

\

Denote $\Omega_{r,s}=G\cdot \Sigma_{r,s}$. Let $\xi_0\in\Omega_{r,s}$.
\begin{itemize}
\item[1.] If $r>0$, $\{\xi,$ such that $C_\xi=C_{\xi_0}\}$ is exactly the adjoint orbit $G\cdot\xi_0$ of $\xi_0$.\\
\item[2.] If $r=0$, $\{\xi,$ such that $C_\xi=C_{\xi_0}\}$ is the union of two adjoint orbits $G\cdot\xi_0\sqcup G\cdot\xi_1$ .\\
\end{itemize}
\end{lem}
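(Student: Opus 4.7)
The plan is to combine Lemma 2.1 (every orbit in $\Omega$ meets $\Sigma$) and Lemma 2.2 ($\Sigma$ is a section for the action of $G$ on $\Omega$) with the elementary fact that the characteristic polynomial determines the multiset of roots. The key observation to make at the outset is that for $\xi_0 \in \Omega_{r,s} \subset \Omega$, the polynomial $C_{\xi_0}$ has $n$ distinct complex roots, so any $\xi$ with $C_\xi = C_{\xi_0}$ automatically has $n$ distinct eigenvalues and hence lies in $\Omega$. Thus Lemma 2.1 applies and the orbit $G \cdot \xi$ contains a point $\hat\xi \in \Sigma$; by Lemma 2.2 this $\hat\xi$ is the unique $\Sigma$-representative of $G \cdot \xi$, and its spectrum matches that of $\xi_0$, namely the same ordered data $c_1 < \dots < c_r$ and $(a_1,b_1) < \dots < (a_s,b_s)$ with all $b_k > 0$.

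For part (1), when $r > 0$, the lexicographic ordering imposes uniqueness: there is exactly one element of $\Sigma_{r,s}$ with the prescribed spectrum. So $\hat\xi$ coincides with the $\Sigma$-representative of $G \cdot \xi_0$, and Lemma 2.2 yields $G \cdot \xi = G \cdot \xi_0$. The reverse inclusion $G \cdot \xi_0 \subseteq \{\xi : C_\xi = C_{\xi_0}\}$ is immediate since the characteristic polynomial is conjugation invariant.

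For part (2), when $r = 0$, the ordered spectrum now corresponds to exactly two points of $\Sigma$, one element $\xi_0^+ \in \Sigma^+_{0,s}$ and one element $\xi_0^- \in \Sigma^-_{0,s}$, differing only by the sign convention in the first $2 \times 2$ block. Hence $\hat\xi \in \{\xi_0^+, \xi_0^-\}$, which gives $\{\xi : C_\xi = C_{\xi_0}\} \subseteq G \cdot \xi_0^+ \cup G \cdot \xi_0^-$, and again the reverse inclusion is obvious. The one remaining question is whether $G \cdot \xi_0^+$ and $G \cdot \xi_0^-$ are genuinely distinct; this is exactly the section property from Lemma 2.2, since if they coincided a single orbit would meet $\Sigma$ in two distinct points, contradicting Lemma 2.2. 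Setting $\xi_1 = \xi_0^-$ when $\xi_0 \in G \cdot \xi_0^+$ (and symmetrically) completes the statement.

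I do not foresee a serious obstacle: the argument is essentially a bookkeeping exercise layered on top of Lemmas 2.1 and 2.2. The only point requiring care is the count of $\Sigma$-representatives with a given spectrum, which is one when $r>0$ and two when $r=0$; the dichotomy between (1) and (2) is a direct consequence of this count.
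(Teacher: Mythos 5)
Your proposal is correct and follows exactly the route the paper intends: the lemma is stated there without a separate proof precisely because it is the immediate bookkeeping consequence of Lemma 2.1 (every orbit in $\Omega$ meets $\Sigma$) and Lemma 2.2 ($\Sigma$ is a section, with the $D^+$/$D^-$ distinction when $r=0$), which is what you carry out. Your count of $\Sigma$-representatives with a given spectrum (one if $r>0$, two if $r=0$) and the appeal to the section property to keep $G\cdot\xi_0^+$ and $G\cdot\xi_0^-$ distinct match the paper's reasoning.
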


Now, show that $\Omega$ is dense. Let $\xi$ be an arbitrary matrix of $\mathfrak{sl}(n,\R)$. On $\C$, we can transform this matrix in Jordan form:  
\begin{itemize}
\item[1.] for each Jordan block $J_j(c)$ associated to the real eigenvalue $c$ of $\xi$, there exist vectors $E_j^1,\dots,E_j^{t}$ in $\C^n$ such that $\xi E^1_j=c E^1_j$ and, if $t>1$, $\xi E^t_j=c E_j^t+ E_j^{t-1}$. We can choose $E_j^t$ real.\\

\item[2.] for each Jordan block $J_k(a+ib)$ associated to the non real eigenvalue $a+ib$, with $b>0$, we can choose vectors $F_k^1,\dots,F_k^{t}$ in $\C^n$ such that $\xi F^1_k=(a+ib) F^1_k$ and, if $t>1$, $\xi F^t_k=(a+ib) F_k^t+ F_k^{t-1}$. The union of these families of vectors, for all Jordan blocks associated to $a+ib$, form a basis of the characteristic subspace $V(a+ib)$ of $\xi$ associated to $a+ib$. The combined vectors $\overline{F_k^t}$ form a basis of the characteristic subspace $V(a-ib)$. In this basis, the matrix of $\xi|_{V(a-ib)}$ is also in Jordan form. As above, put $F_k^t=E_k^t+iE'^t_k$, where $E_k^t$ and $E'^t_k$ are real vectors.\\

\end{itemize}
We arrange the eigenvalues of $\xi$ as above, then we obtain a real basis of $\R^n$. On this basis, $\xi$ is written as follows :
$$\aligned
\xi'&=P\xi P^{-1}=\left(\begin{matrix}c_1&u_1&&&&&&&&\\ &\ddots&\ddots&&&&&&&\\ &&c_{r-1}&u_{r-1}&&&&&&\\ &&&c_r&&&&&&\\ &&&&a_1&b_1&v_1&&&\\ &&&&-b_1&a_1&0&v_1&&\\&&&&&&\ddots&\ddots&&\\ &&&&&&a_{s-1}&b_{s-1}&v_{s-1}&\\ &&&&&&-b_{s-1}&a_{s-1}&0&v_{s-1}\\ &&&&&&&&a_s&b_s\\ &&&&&&&&-b_s&a_s\end{matrix}\right)\\
&=Diag(c_1,\dots,c_r,\left(\begin{matrix}a_1&b_1\\-b_1&a_1\end{matrix}\right),\dots, \left(\begin{matrix}a_s&b_s\\-b_s&a_s\end{matrix}\right))+\\
&\hskip 1cm+ Overdiag(u_1,\dots,u_{r-1},\left(\begin{matrix}v_1&0\\0&v_1\end{matrix}\right),\dots,\left(\begin{matrix}v_{s-1}&0\\0&v_{s-1}\end{matrix}\right))\\
&=D+N.
\endaligned
$$
Where $P$ is a real matrix, $Diag$ means that we place the cited sub matrices on the diagonal, $Overdiag$ means that we place the cited sub matrices on the second diagonal,  $u_j$ and $v_k$ are either 0 or 1.

Let now the real numbers $x_1,\dots,x_r$ and $y_1,\dots,y_s$  all distinct such that $x_1+\dots+x_r+2y_1+\dots+2y_s=0$. Put:
$$
A=Diag(x_1,\dots,x_r,y_1,y_1,\dots,y_s,y_s),
$$
and for all $\varepsilon>0$, $\xi_\varepsilon=\xi+\varepsilon P^{-1}AP$. For almost every $\varepsilon$, the trace of $\xi_\varepsilon$ is zero and $\xi_\varepsilon$ has $n$ distinct eigenvalues. Then, $\xi_\varepsilon\in\Omega$ and for any norm on $\mathfrak{sl}(n,\R)$, $\Vert\xi-\xi_\varepsilon\Vert=\varepsilon\Vert P^{-1}AP\Vert$. This proves :\\

\begin{lem}

\

The set $\Omega$ is dense in $\mathfrak{sl}(n,\R)$.\\
\end{lem}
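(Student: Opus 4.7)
The plan is to use the explicit perturbation that was already set up in the paragraph preceding the lemma statement, and to finish by checking that a single generic perturbation parameter indeed lands us in $\Omega$.

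Given an arbitrary $\xi\in\mathfrak{sl}(n,\R)$, I first conjugate by a real $P\in GL(n,\R)$ to put $\xi$ in real Jordan form $\xi'=P\xi P^{-1}=D+N$, with $D$ block-diagonal (real scalars $c_i$ on the top block, $2\times 2$ rotation-type blocks $\bigl(\begin{smallmatrix}a_k&b_k\\-b_k&a_k\end{smallmatrix}\bigr)$ on the bottom block) and $N$ the nilpotent superdiagonal piece displayed in the excerpt. I then form $A=\mathrm{Diag}(x_1,\dots,x_r,y_1 I_2,\dots,y_s I_2)$ with the $x_i$ pairwise distinct, the $y_k$ pairwise distinct, and $\sum x_i+2\sum y_k=0$, and set $\xi_\varepsilon=\xi+\varepsilon P^{-1}AP$. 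The traceless condition on $A$ gives $\xi_\varepsilon\in\mathfrak{sl}(n,\R)$, and clearly $\|\xi_\varepsilon-\xi\|=\varepsilon\|P^{-1}AP\|\to 0$ as $\varepsilon\to 0$, so the whole problem reduces to showing $\xi_\varepsilon\in\Omega$ for all but finitely many $\varepsilon$.

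For this I exploit that $P\xi_\varepsilon P^{-1}=D+N+\varepsilon A$ is block upper-triangular in exactly the same block pattern as $D$: the $r\times r$ real part is upper-triangular with diagonal entries $c_i+\varepsilon x_i$, while the $2s\times 2s$ complex part is block upper-triangular with $2\times 2$ diagonal blocks $\bigl(\begin{smallmatrix}a_k+\varepsilon y_k&b_k\\-b_k&a_k+\varepsilon y_k\end{smallmatrix}\bigr)$. Consequently the $n$ eigenvalues are, over $\C$, the affine functions $c_i+\varepsilon x_i\in\R$ and $(a_k+\varepsilon y_k)\pm i b_k$. Since the $x_i$ are distinct and the $y_k$ are distinct, any two of these functions either coincide for at most one value of $\varepsilon$ or are separated for all $\varepsilon$ by their imaginary parts (using $b_k>0$, and that real eigenvalues cannot equal non-real ones). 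Hence the discriminant of the characteristic polynomial of $\xi_\varepsilon$ is a nonzero polynomial in $\varepsilon$, vanishing only on a finite set.

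Avoiding this finite set while sending $\varepsilon\to 0^+$ produces a sequence in $\Omega$ converging to $\xi$, which proves the density. The only step with any content is the eigenvalue-distinctness check in the previous paragraph, and once the block-triangular form is made explicit there is no genuine obstacle: the argument is reduced to the elementary observation that finitely many non-constant affine functions of $\varepsilon$ agree pairwise on a finite set.
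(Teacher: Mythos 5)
Your proposal is correct and is essentially the paper's own argument: the paper proves this lemma in the paragraph preceding its statement, using exactly the perturbation $\xi_\varepsilon=\xi+\varepsilon P^{-1}AP$ with the same block-diagonal traceless $A$. The only difference is that you spell out why the eigenvalues $c_i+\varepsilon x_i$ and $(a_k+\varepsilon y_k)\pm ib_k$ are pairwise distinct outside a finite set of $\varepsilon$, a point the paper compresses into ``for almost every $\varepsilon$''.
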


Show that the set $\Omega$ is an open :\\

We use the implicit function theorem. Let $\xi_0$ a matrix in $\Omega$. The characteristic polynomial $C_{\xi_0}$ of $\xi_0$ has $n$ simple roots. If $C_{\xi_0}(\alpha)=0$, then $C'_{\xi_0}(\alpha)\neq 0$.

If $c_j$ is a real eigenvalue of $\xi_0$, then we consider the map $F_j:\mathfrak{sl}(n,\R)\times\R\longrightarrow\R$ defined by $F_j(\xi,x)=C_\xi(x)$.

If $a_k+ib_k$ is a non real eigenvalue of $\xi_0$, we note $C_\xi(z)=C_\xi(x+iy)=A_\xi(x,y)+i B_\xi(x,y)$, where $A_\xi$ and $B_\xi$ are real. $C_\xi$ is a polynomial in $z$, then
$$
\frac{\partial}{\partial\overline{z}}C_\xi(z)=\frac{\partial}{\partial x}C_\xi(z)+i \frac{\partial}{\partial y}C_\xi(z)=0,
$$
for all $z$. Since $a_k+ib_k$ is a simple root of $C_{\xi_0}$, then 
$$
\frac{\partial}{\partial z}C_{\xi_0}(a_k+ib_k)=\frac{\partial}{\partial x}C_{\xi_0}(a_k+ib_k)-i \frac{\partial}{\partial y}C_{\xi_0}(a_k+ib_k)\neq0.
$$
Therefore, we have either $\frac{\partial}{\partial x}A_{\xi_0}(a_k+ib_k)\neq0$ and $\frac{\partial}{\partial y}B_{\xi_0}(a_k+ib_k)\neq0$ or $\frac{\partial}{\partial y}A_{\xi_0}(a_k+ib_k)\neq0$ and $\frac{\partial}{\partial x}B_{\xi_0}(a_k+ib_k)\neq0$. In all cases,
$$
\frac{D(A_{\xi_0},B_{\xi_0})}{D(x,y)}(a_k,b_k)=\left|\begin{matrix}\partial_x A_{\xi_0}&\partial_yA_{\xi_0}\\ \partial_xB_{\xi_0}&\partial_y B_{\xi_0}\end{matrix}\right|(a_k,b_k)=(\partial_x A_{\xi_0}(a_k,b_k))^2+(\partial_xB_{\xi_0}(a_k,b_k))^2\neq0.
$$
We define $F_j:\mathfrak{sl}(n,\R)\times\R^2\longrightarrow\R^2$ by $F_k(\xi,x,y)=(A_\xi(x+iy),B_\xi(x+iy))$, then  $Jac(F_k)(\xi_0,a_k,b_k)\neq0$.\\

The functions $F_j$, $F_k$ are differentiable, then $F_j(\xi_0,c_j)=0$ and $\frac{\partial F_j}{\partial x}(\xi_0,c_j)=C'_{\xi_0}(c_j)\neq0$. Similarly, $F_k(\xi_0,a_k,b_k)=0$ and $\frac{D F_k}{D(x,y)}(\xi_0,a_k,b_k)\neq0$.\\

So, there exists an open $U_j$ (resp. $U_k$) of $\mathfrak{sl}(n,\R)$, containing $\xi_0$, and there is an open $V_j$ of $\R$ containing $c_j$ (resp. $V_k$ of $\R^2$,  containing $(a_k,b_k)$) and there are maps $f_j:U_j\longrightarrow V_j$ (resp. $f_k:U_k\longrightarrow V_k$) such that 
$$
\left.\aligned
(\xi,x)\in U_j\times V_j\cr
F_j(\xi,x)=0\endaligned\right\}~~\Longleftrightarrow~~\left\{\aligned (\xi,x)\in U_j\times V_j\cr 
x=f_j(\xi)\endaligned\right.
$$
$$
\quad\Big(\text{resp.}~~
\left.\aligned
(\xi,x,y)\in U_k\times V_k\cr
F_k(\xi,x,y)=0\endaligned\right\}~~\Longleftrightarrow~~\left\{\aligned (\xi,x,y)\in U_k\times V_k\cr 
(x,y)=f_k(\xi)\endaligned\right.\Big)
$$

We replace as needed the open $V_r$ by another open small enough such that
$$
V_j\cap \left(\bigcup_{j'\neq j}V_{j'}\right)=\emptyset,\quad
V_k\cap \left((\R\times\{0\})\cup\bigcup_{k'\neq k}V_{k'}\right)=\emptyset.
$$
And we put $U=\bigcap_r U_r$. $U$ is an open containing $\xi_0$ and, for all $\xi$ in $U$, $C_\xi$ vanishes at $n$ distinct points (real or complex), then, $U\subset\Omega$.\\

\begin{lem}

\

The set $\Omega$ is an open in $\mathfrak{sl}(n,\R)$.\\
\end{lem}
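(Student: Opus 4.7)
The plan is to show that every $\xi_0 \in \Omega$ admits an open neighborhood $U \subset \Omega$, by applying the implicit function theorem to track each of the $n$ distinct eigenvalues of $\xi_0$ as a continuous function of $\xi$. Because $\xi_0$ has simple spectrum, each eigenvalue $\alpha$ satisfies $C'_{\xi_0}(\alpha) \neq 0$, which is precisely the non-degeneracy needed for the implicit function theorem applied to the characteristic polynomial equation $C_\xi(\alpha) = 0$.

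Concretely, for each real eigenvalue $c_j$ of $\xi_0$, I would apply the implicit function theorem to the smooth map $F_j(\xi, x) = C_\xi(x)$ at $(\xi_0, c_j)$, using $\partial_x F_j(\xi_0, c_j) = C'_{\xi_0}(c_j) \neq 0$, to obtain open neighborhoods $U_j \ni \xi_0$, $V_j \ni c_j$ and a smooth function $f_j : U_j \to V_j$ whose graph is the zero locus of $F_j$ in $U_j \times V_j$. For each non-real pair $a_k \pm i b_k$, I would apply the implicit function theorem to $F_k(\xi, x, y) = (A_\xi(x+iy), B_\xi(x+iy))$; the Cauchy--Riemann computation already carried out in the excerpt shows that the Jacobian of $F_k$ with respect to $(x,y)$ equals $|C'_{\xi_0}(a_k+ib_k)|^2 \neq 0$, so one gets neighborhoods $U_k \ni \xi_0$, $V_k \ni (a_k, b_k)$ and a smooth map $f_k : U_k \to V_k$ tracking the complex eigenvalue pair.

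Finally, I would shrink all the $V_j$ and $V_k$ until they are pairwise disjoint, each $V_k$ is disjoint from the real axis $\R \times \{0\}$, and no $V_j$ meets the projection on $\R$ of any $V_k$; this is possible because the eigenvalues of $\xi_0$ are $n$ distinct points of $\C$. Correspondingly shrinking the $U_j$ and $U_k$, set $U = \bigcap_j U_j \cap \bigcap_k U_k$, an open neighborhood of $\xi_0$. For any $\xi \in U$, the values $f_j(\xi) \in V_j$ give $r$ distinct real roots of $C_\xi$, and the values $f_k(\xi) = (x_k, y_k) \in V_k$ give $s$ distinct non-real roots $x_k + i y_k$ of $C_\xi$ (together with their conjugates); the disjointness of the $V$'s forces these $n$ roots to be distinct, so $\xi \in \Omega$ and hence $U \subset \Omega$. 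The main obstacle is bookkeeping rather than depth: one must verify that the $V$'s can be arranged simultaneously to be mutually disjoint and to avoid the real axis in the complex case, so that the locally tracked eigenvalues remain genuinely distinct and genuinely of the right type $(r,s)$ as $\xi$ varies in $U$.
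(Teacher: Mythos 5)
Your argument is correct and is essentially the same as the paper's: the implicit function theorem applied to $F_j(\xi,x)=C_\xi(x)$ for real eigenvalues and to $F_k(\xi,x,y)=(A_\xi,B_\xi)$ for complex pairs, with the Cauchy--Riemann computation giving the nonvanishing Jacobian, followed by shrinking the $V$'s to be pairwise disjoint and away from $\R\times\{0\}$ and intersecting the $U$'s. No substantive difference from the paper's proof.
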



\section{$\mathfrak{sl}(n,\R)$ admits an overalgebra almost separating of degree $n$}


\subsection{Separation of orbits of $\Omega$ by invariant functions }


\

This is also well known. Let $\xi$ a $n\times n$ real matrix and $C_\xi$ its characteristic polynomial. On $\C$, we can put $\xi$ in Jordan form. We note $z_1,\dots,z_n$ the diagonal terms of this Jordan form. Then :
$$
\aligned
C_\xi(X)&=(-1)^n\det(\xi-X I)=(X-z_1)\cdots(X-z_n)\\
&=X^n-(\sum_i z_i)X^{n-1}+(\sum_{i<j}z_iz_j)X^{n-2}+\dots+(-1)^nz_1\dots z_n\\
&=X^n-\alpha_{n-1}X^{n-1}+\alpha_{n-2}X^{n-2}+\dots+(-1)^n\alpha_0.
\endaligned
$$
Therefore, using a formula due to Newton ({\sl{cf.}} \cite{W}), we have, for all $k$,
$$
\aligned
(-1)^{k+1}\sum_{i_1<\dots<i_k}z_{i_1}z_{i_2}\dots z_{i_k}&=\sum_jz_j^k-(\sum_{i_1}z_{i_1})(\sum_j z_j^{k-1})+(\sum_{i_1<i_2}z_{i_1}z_{i_2})(\sum_j z_j^{k-2})+\dots\\
&\hskip 1cm+\dots+(-1)^{k-1}(\sum_{i_1<\dots<i_{k-1}}z_{i_1}\dots z_{i_{k-1}})(\sum_j z_j)\\
\endaligned
$$
or
$$
(-1)^{k+1}\alpha_{n-k}=Tr(\xi^k)-\alpha_{n-1}Tr(\xi)+\alpha_{n-2}Tr(\xi^{k-1})+\dots+(-1)^{k-1}\alpha_{k-1}Tr(\xi).
$$
This formula allows to express all $\alpha_k$ as functions of the numbers $Tr(\xi^j)$, and conversely, to express all $Tr(\xi^k)$ as functions of the numbers $\alpha_j$.\\

Finally, we deduce that:

two matrices $\xi$ and $\xi'$ satisfying $C_{\xi}=C_{\xi'}$ if and only if $Tr(\xi^k)=Tr(\xi'^k)$ for all $k=1,\dots,n$.\\

\begin{prop}

\

We keep all previous notations, in particular, $\Omega=\cup_{r+2s=n}\Omega_{r,s}$ is an open, dense and invariant subset of $\mathfrak{sl}(n,\R)$. The orbits of $\Omega$ will be called generic orbits. Let $\xi_0\in\Omega_{r,s}$.
\begin{itemize}
\item[1.] If $r>0$, $\{\xi,$ such that $T_k(\xi)=T_k(\xi_0)$ for all $k=2,\dots,n~\}$ is exactly the adjoint orbit $G\cdot\xi_0$ of $\xi_0$,\\
\item[2.] If $r=0$, $\{\xi,$ such that $T_k(\xi)=T_k(\xi_0)$ for all $k=2,\dots,n~\}$ is the union of two adjoint orbits $G\cdot\xi_0\sqcup G\cdot\xi_1$.\\
\end{itemize}
\end{prop}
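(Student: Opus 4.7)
The plan is to combine the Newton identities derived just above the statement with Lemma~2.3, which already classifies matrices having a given characteristic polynomial once $\xi_0\in\Omega_{r,s}$. In other words, the proposition is essentially a reformulation of Lemma~2.3 using the invariants $T_k$ instead of the characteristic polynomial $C_\xi$, and what must be supplied is only the translation between these two sets of invariants.

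First I would observe that on $\mathfrak{sl}(n,\R)$ one has $T_1(\xi)=Tr(\xi)=0$, so the coefficient $\alpha_{n-1}$ of $C_\xi$ vanishes automatically and the $(n-1)$-tuple $(T_2(\xi),\dots,T_n(\xi))$ carries exactly the same information as the full power-sum vector $(T_1(\xi),\dots,T_n(\xi))$. The Newton recursion recalled just above the statement expresses, triangularly, each $\alpha_{n-k}$ as a polynomial in $T_1,\dots,T_k$ and, conversely, each $T_k$ as a polynomial in the $\alpha_j$. Consequently the level set
\[
\bigl\{\xi\in\mathfrak{sl}(n,\R)\,:\,T_k(\xi)=T_k(\xi_0),\ k=2,\dots,n\bigr\}
\]
coincides with $\bigl\{\xi\in\mathfrak{sl}(n,\R)\,:\,C_\xi=C_{\xi_0}\bigr\}$.

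It then suffices to apply Lemma~2.3 directly to $\xi_0\in\Omega_{r,s}$. If $r>0$, the latter set equals the single adjoint orbit $G\cdot\xi_0$; if $r=0$, it splits as $G\cdot\xi_0\sqcup G\cdot\xi_1$, the two orbits being those of the representatives $D^+(a_k+ib_k)$ and $D^-(a_k+ib_k)$ of $\Sigma^+_{0,n/2}\cup\Sigma^-_{0,n/2}$ associated to the same unordered spectrum. There is essentially no obstacle here: the substance of the proposition is already contained in Lemma~2.3 and in the Newton identities, and the only minor point to check is that dropping $T_1$ from the list of invariants is harmless because $T_1$ is identically zero on $\mathfrak{sl}(n,\R)$.
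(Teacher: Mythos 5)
Your proposal is correct and matches the paper's own treatment: the paper likewise proves the proposition by using the Newton identities (together with $T_1\equiv 0$ on $\mathfrak{sl}(n,\R)$) to identify the level set of $(T_2,\dots,T_n)$ with the level set of the characteristic polynomial, and then invokes the earlier lemma classifying $\{\xi : C_\xi=C_{\xi_0}\}$ for $\xi_0\in\Omega_{r,s}$ as one orbit ($r>0$) or two orbits ($r=0$). No gap here; your remark about dropping $T_1$ is exactly the only point needing attention.
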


We say that the invariant functions $T_k$ separate almost the (co)adjoint generic orbits of $\mathfrak{sl}(n,\R)$.\\


\subsection{Convex hull of orbits of $\Omega$}

\

\

For $n=2$, the convex hull of the orbits of $\Omega$ are well known (cf.\cite{ASZ}). We deduce that, for $n=2$ :
$$
\text{Conv}(G\cdot D(-c,c))=\mathfrak{sl}(2,\R)\quad (c>0),\quad \text{ and }
$$
$$
\text{Conv}(G\cdot D^+(ib)\cup G\cdot D^-(ib))=\mathfrak{sl}(2,\R)\quad (b>0).
$$

For $n=3$, we deduce that $\Omega\subset\text{Conv}(G\cdot D(c_1,c_2,c_3))$. Indeed, if $c'_1<c'_2<c'_3$ such that $c'_1+c'_2+c'_3=0$, then either $c'_1\neq-2c_3$, or $c'_2\neq-2c_3$. Suppose $c'_1\neq-2c_3$, the other case is trained the same by exchanging the induces 1 and 2. Let $c''_1=c'_1-\frac{c_1+c_2}{2}$ and $c''_2=-c'_1-c_3$. We write:
$$
\left(\begin{matrix}c_1&\\&c_2\end{matrix}\right)=\left(\begin{matrix}\frac{1}{2}(c_1-c_2)&\\&\frac{1}{2}(c_2-c_1)\end{matrix}\right)+\frac{c_1+c_2}{2} \left(\begin{matrix}1&\\&1\end{matrix}\right),
$$
then, there exist $t$ in $[0,1]$ and $g\in SL(2,\R)$ such that :
$$
\left(\begin{matrix}c'_1&\\&c''_2\end{matrix}\right)=\left(\begin{matrix}c''_1&\\&-c''_1\end{matrix}\right)+\frac{c_1+c_2}{2} \left(\begin{matrix}1&\\&1\end{matrix}\right)=t\left(\begin{matrix}c_1&\\&c_2\end{matrix}\right)+(1-t)g\left(\begin{matrix}c_1&\\&c_2\end{matrix}\right)g^{-1}.
$$
We deduce that the convex hull of $G\cdot D(c_1,c_2,c_3)$ contains $\left(\begin{matrix}c'_1&&\\&c''_2&\\&&c_3\end{matrix}\right)$ with $c''_2\neq c_3$.  By the same argument, but with induces 2 and 3, we show that this convex hull contains $D(c'_1,c'_2,c'_3)$. Let now $a'=-\frac{1}{2}c'_1$, and $b'>0$, then :
$$
\left(\begin{matrix}c'_2&\\&c'_3\end{matrix}\right)=\left(\begin{matrix}\frac{1}{2}(c'_2-c'_3)&\\&\frac{1}{2}(c'_3-c'_2)\end{matrix}\right)+a' \left(\begin{matrix}1&\\&1\end{matrix}\right),
$$
the convex hull of $G\cdot D(c_1,c_2,c_3)$ contains also 
$$
D(c'_1,a'+ib')=\left(\begin{matrix}c'_1&&\\&0&b'\\&-b'&0\end{matrix}\right)+a'\left(\begin{matrix}0&&\\&1&\\&&1\end{matrix}\right).
$$

On the other hand, we saw that $\left(\begin{matrix}c&&\\&a&-b\\&b&a\end{matrix}\right)$ belongs to $G\cdot D(c,a+ib)$. Therefore, if $a\neq 0$ then $\text{Conv}(G\cdot D(c,a+ib))$ contains the matrix $\left(\begin{matrix}c&&\\&a&\\&&a\end{matrix}\right)$, with $a\neq c$. So, by our first argument, $\text{Conv}(G\cdot D(c,a+ib))$ contains the matrix $\left(\begin{matrix}c'_1&&\\&c'_2&\\&&a\end{matrix}\right)$ with $c'_1\neq c'_2\neq a\neq c'_1$.Therefore, by the above, $\text{Conv}(G\cdot D(c,a+ib))$ contains all $\Omega$. If $a=0$, $\text{Conv}(G\cdot D(0,ib))$ contains the matrices $\left(\begin{matrix}0&\\&D^+(ib)\end{matrix}\right)$ and $\left(\begin{matrix}0&\\&D^-(ib)\end{matrix}\right)$. So, $\text{Conv}(G\cdot D(0,ib))$ contains the matrix $\left(\begin{matrix}0&&\\&-1&\\&&1\end{matrix}\right)$. Finally, $\text{Conv}(G\cdot D(0,ib))$ contains all $\Omega$.

We have proved:

\begin{lem}

\

If $n=3$ and $\xi\in\Omega$, then $\Omega\subset\text{\rm Conv}(G\cdot \xi)$.\\
\end{lem}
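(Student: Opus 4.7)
The plan is to use the section $\Sigma=\Sigma_{3,0}\cup\Sigma_{1,1}$ together with $G$-invariance of $\text{Conv}(G\cdot\xi)$ to reduce the statement to showing that $\text{Conv}(G\cdot\xi)$ meets every piece of $\Sigma$. Throughout I rely on the two $n=2$ facts already recalled: $\text{Conv}(SL(2,\R)\cdot D(-c,c))=\mathfrak{sl}(2,\R)$ and $\text{Conv}(SL(2,\R)\cdot D^+(ib)\cup SL(2,\R)\cdot D^-(ib))=\mathfrak{sl}(2,\R)$, applied after embedding $SL(2,\R)\hookrightarrow SL(3,\R)$ as either the upper-left or the lower-right $2\times 2$ block.

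First I would prove the lemma for $\xi=D(c_1,c_2,c_3)\in\Sigma_{3,0}$. Starting from the decomposition
\[
\text{diag}(c_1,c_2)=\text{diag}\!\left(\tfrac{c_1-c_2}{2},\tfrac{c_2-c_1}{2}\right)+\tfrac{c_1+c_2}{2}I_2
\]
inside the upper-left block and invoking the $n=2$ convex hull result there, I can replace $(c_1,c_2)$ by any pair $(c'_1,c''_2)$ with $c'_1+c''_2=c_1+c_2$. A second such move on the lower-right block shifts $(c''_2,c_3)$ freely, and composing the two realises every $D(c'_1,c'_2,c'_3)\in\Sigma_{3,0}$ inside $\text{Conv}(G\cdot\xi)$. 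One further application, replacing the lower-right real block by a rotation block via the complex $n=2$ result with $a'=-c'_1/2$ and $b'>0$ arbitrary, exhausts $\Sigma_{1,1}$ as well, giving $\Omega\subset\text{Conv}(G\cdot\xi)$.

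Second, I would handle $\xi\in\Omega_{1,1}$ with orbit representative $D(c,a+ib)$. Because $n=3$ is odd, case (c) of the section lemma does not occur, so the matrix obtained by flipping the sign of $b$ in the lower-right block already lies in the same $G$-orbit as $D(c,a+ib)$. Applying the $n=2$ convex hull result inside that block then yields, in $\text{Conv}(G\cdot\xi)$, every matrix of block form $\text{diag}(c,X)$ with $X-aI_2\in\mathfrak{sl}(2,\R)$. Choosing $X=\text{diag}(c'_2,c'_3)$ with $c'_2+c'_3=2a$ and $c,c'_2,c'_3$ pairwise distinct places an element of $\Sigma_{3,0}$ inside $\text{Conv}(G\cdot\xi)$, and the first step closes the argument; the case $a=0$ is the same, with the pair $(c'_2,c'_3)=(-b',b')$ reaching $\text{diag}(0,-b',b')$ directly.

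The main subtlety I expect is precisely this second reduction: one must check that both sign-variants of the lower-right block genuinely lie in a single $G$-orbit (so that the convex hull of one orbit already captures both at once), and that the target triple $(c,c'_2,c'_3)$ can always be chosen with pairwise distinct entries. Both points are mild, the first resting on the oddness of $n=3$ and an explicit $g\in SL(3,\R)$ such as $\text{diag}(-1,1,-1)$ composed with a swap of the last two coordinates, and granted them the entire proof is a clean bootstrap from the $n=2$ base case.
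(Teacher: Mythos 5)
Your proposal is correct and follows essentially the same route as the paper: the same $2\times2$ block decomposition $\mathrm{diag}(c_1,c_2)=\mathrm{diag}(\tfrac{c_1-c_2}{2},\tfrac{c_2-c_1}{2})+\tfrac{c_1+c_2}{2}I_2$ iterated over blocks, the same use of the two $n=2$ convex hull facts, and the same reduction of $D(c,a+ib)$ to the real-diagonal case via the observation that both sign variants of the rotation block lie in one orbit (the paper merely averages them to $\mathrm{diag}(c,a,a)$ when $a\neq0$ instead of invoking the union-of-orbits fact uniformly as you do). Two small repairs: your explicit element ($\mathrm{diag}(-1,1,-1)$ composed with the swap of the last two coordinates) has determinant $-1$, whereas $\mathrm{diag}(-1,1,-1)$ alone already flips the sign of $b$ and lies in $SL(3,\R)$; and your second $2\times2$ move requires $c''_2\neq c_3$, which, as in the paper, is secured by noting that either $c'_1\neq-2c_3$ or $c'_2\neq-2c_3$ and adjusting the corresponding index first.
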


If $n=4$, as above, $\Omega\subset\text{Conv}(G\cdot D(c_1,\dots,c_4))$. We deduce by using the lemma that for all $c'_2,\dots,c'_4$,
$$
D(c_1,c'_2,c'_3,c'_4)\in\text{Conv}(G\cdot D(c_1,c_2,a+ib))
$$
and also $\Omega\subset\text{Conv}(G\cdot D(c_1,c_2,a+ib))$. It remains the cases $D(a_1+ib_1,a_2+ib_2)$, $a_1\neq0$ and $D(ib_1,ib_2)$.
In the first case, we saw that
$$
\left(\begin{matrix}a_1&-b_1&&\\b_1&a_1&&\\&&a_2&-b_2\\&&b_2&a_2\end{matrix}\right)\in G\cdot D(a_1+ib_1,a_2+ib_2),
$$
then
$$
\left(\begin{matrix}a_1&&&\\&a_1&&\\&&a_2&\\&&&a_2\end{matrix}\right)\in\text{Conv}( G\cdot D(a_1+ib_1,a_2+ib_2))\qquad\text{ and } a_1\neq a_2.
$$
By applying the calculation for $n=2$, we deduce that $D(a_1,x,y,a_4)$ belongs to $\text{Conv}( G\cdot D(a_1+ib_1,a_2+ib_2))$, for all $x$ and $y$ such that $a_1+x+y+a_4=0$. Therefore, $\Omega\subset \text{Conv}( G\cdot D(a_1+ib_1,a_2+ib_2))$.

For the latter case, we saw that, in$\mathfrak{sl}(2,\R)$, the adjoint orbit of $D(ib)$ is the set of matrices $\left(\begin{matrix}x&y+z\\y-z&-x\end{matrix}\right)$ with $z^2-x^2-y^2=b^2$ and $z>0$. Then, $G\cdot D(ib_1,ib_2)$ contains a matrix as follows:
$$
\left(\begin{matrix}x&z&&\\-z&-x&&\\&&0&b_2\\&&-b_2&0\end{matrix}\right),\qquad\text{ with}\quad 0<b_1<z.
$$
Combining this matrix with $\left(\begin{matrix}0&1&&\\1&0&&\\&&0&1\\&&1&0\end{matrix}\right)$, we obtain :
$$
\xi=\left(\begin{matrix}-x&-z&&\\z&x&&\\&&0&-b_2\\&&b_2&0\end{matrix}\right)\in G\cdot D(ib_1,ib_2).
$$
If $t=\frac{b_1}{z+b_1}$, the matrix $t\xi+(1-t)D(ib_1,ib_2)$ is $D(-tx,tx,i(1-2t)b_2)\in\text{Conv}(G\cdot D(ib_1,ib_2))$. Or\\

\begin{lem}

\

If $n=4$ and $\xi\in\Omega$, then $\Omega\subset\text{\rm Conv}(G\cdot \xi)$.\\
\end{lem}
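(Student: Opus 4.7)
The plan is to adapt the argument used for $n=3$ to the $n=4$ setting by treating the section representatives $\xi\in\Sigma_{r,s}$ case by case. In each case I want to produce, in $\overline{\mathrm{Conv}}(G\cdot\xi)$, a representative of a ``simpler'' section to which the previously-established $n=3$ lemma or the $n=2$ convex-hull facts can be applied, then close the argument by the inclusion $\overline{\mathrm{Conv}}(G\cdot A)\subset\overline{\mathrm{Conv}}(G\cdot\xi)$ whenever $A\in\overline{\mathrm{Conv}}(G\cdot\xi)$.

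First, for $\xi=D(c_1,c_2,c_3,c_4)\in\Sigma_{4,0}$, the block subgroup $\{1\}\times SL(3,\R)\subset SL(4,\R)$ fixes the first basis vector and acts on the lower $3\times 3$ block, whose trace is $-c_1$; writing that block as $-\tfrac{c_1}{3}I_3+M$ with $M\in\mathfrak{sl}(3,\R)$ and applying the $n=3$ lemma to $M$ shows that every matrix $\mathrm{diag}(c_1,M')$ with $M'+\tfrac{c_1}{3}I_3$ generic lies in the closed convex hull; iterating with a different fixed coordinate then covers all of $\Omega$. For $\xi=D(c_1,c_2,a+ib)\in\Sigma_{2,1}$, the same embedding of $SL(3,\R)$ on the last three coordinates puts a generic fully real diagonal matrix into $\overline{\mathrm{Conv}}(G\cdot\xi)$, and the previous case closes the argument. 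For $\xi=D(a_1+ib_1,a_2+ib_2)\in\Sigma^{\pm}_{0,2}$ with $a_1\neq 0$, the trace-zero condition gives $a_2=-a_1\neq a_1$; conjugation by $\mathrm{diag}(1,-1,1,-1)\in SL(4,\R)$ simultaneously reverses the signs of $b_1$ and $b_2$, so the arithmetic mean $\mathrm{diag}(a_1,a_1,-a_1,-a_1)$ lies in the convex hull. Applying the $n=2$ convex-hull formula to the middle $2\times 2$ sub-block (whose trace is $0$) produces every $D(a_1,x,-x,-a_1)$ in the convex hull, and the $\Sigma_{4,0}$ step finishes.

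The delicate case is $\xi=D(ib_1,ib_2)$, for which no scalar shift is available. I would invoke the $n=2$ description of the adjoint orbit of $D(ib)$ in $\mathfrak{sl}(2,\R)$ as the sheet $\{z^2-x^2-y^2=b^2,\,z>0\}$: for any $z>b_1$ and any $x$ with $z^2-x^2=b_1^2$ this yields an element $\xi_1\in G\cdot\xi$ whose first block is $\bigl(\begin{smallmatrix}x&z\\-z&-x\end{smallmatrix}\bigr)$. Conjugating $\xi_1$ by the block-wise swap involution $\bigl(\begin{smallmatrix}0&1\\1&0\end{smallmatrix}\bigr)\oplus\bigl(\begin{smallmatrix}0&1\\1&0\end{smallmatrix}\bigr)\in SL(4,\R)$ gives another orbit element $\xi_2$, and the convex combination $t\xi_2+(1-t)\xi$ with $t=b_1/(z+b_1)$ kills the off-diagonal entries of the first block and lands on a matrix $D(-tx,tx,i(1-2t)b_2)\in\Sigma_{2,1}$ (for generic $x$, so that $tx\neq 0$ and $(1-2t)b_2\neq 0$), to which the preceding step applies. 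The main obstacle is precisely this pure-imaginary case: every other reduction proceeds by an abstract averaging via an embedded $SL(k,\R)$-action together with the inductive hypothesis, whereas here one must engineer specific orbit elements and a specific convex coefficient to break out of $\Sigma^{\pm}_{0,2}$ into $\Sigma_{2,1}$.
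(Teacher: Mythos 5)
Your case analysis follows the paper's strategy quite closely: the averaging trick via $\mathrm{diag}(1,-1,1,-1)$ for $D(a_1+ib_1,a_2+ib_2)$ with $a_1\neq 0$, and the specific orbit element plus the convex coefficient $t=b_1/(z+b_1)$ for $D(ib_1,ib_2)$, are exactly the paper's constructions, and your $\Sigma_{2,1}$ reduction via the embedded $SL(3,\R)$ and the $n=3$ lemma matches the paper as well. The problem is your treatment of the real--diagonal case $\Sigma_{4,0}$, on which every other case ultimately leans. The mechanism ``fix one coordinate $c$, shift the complementary $3\times 3$ block to be traceless, and apply the $n=3$ lemma'' only ever produces matrices of the form $c\oplus\bigl(-\tfrac{c}{3}I_3+M''\bigr)$ with $M''\in\Omega_3$; such a matrix always has the real eigenvalue $c$ (in fact at least two real eigenvalues, since a real $3\times 3$ matrix has a real eigenvalue). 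Iterating with a different fixed coordinate does not change this: by induction every matrix you produce has a real eigenvalue. Hence the points of $\Omega_{0,2}$ --- matrices with two complex-conjugate pairs and no real eigenvalue, i.e.\ the orbits of $D^{\pm}(a+ib_1,-a+ib_2)$ --- are never shown to lie in $\mathrm{Conv}\bigl(G\cdot D(c_1,\dots,c_4)\bigr)$, so the claim ``iterating with a different fixed coordinate then covers all of $\Omega$'' fails, and the inclusion $\Omega\subset\mathrm{Conv}(G\cdot\xi)$ is not established for any starting $\xi$ (the other cases reduce to this one).

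The missing step is the one the paper hides in ``as above'': from $D(c_1,c_2,c_3,c_4)$ write
$$
D(c_1,c_2,c_3,c_4)=\left(\begin{matrix}D\bigl(\tfrac{c_1-c_2}{2},\tfrac{c_2-c_1}{2}\bigr)&\\&D\bigl(\tfrac{c_3-c_4}{2},\tfrac{c_4-c_3}{2}\bigr)\end{matrix}\right)+\left(\begin{matrix}\tfrac{c_1+c_2}{2}I_2&\\&\tfrac{c_3+c_4}{2}I_2\end{matrix}\right),
$$
and use $\mathrm{Conv}\bigl(SL(2,\R)\cdot D(-c,c)\bigr)=\mathfrak{sl}(2,\R)$ on \emph{both} $2\times 2$ blocks through the embedded $SL(2,\R)\times SL(2,\R)$; this produces rotation blocks in the two halves simultaneously, i.e.\ all $D^{\pm}(a+ib_1,-a+ib_2)$ with $a=\tfrac{c_1+c_2}{2}$, and combined with the arbitrary real diagonals you already obtained this covers $\Omega_{0,2}$. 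With that supplement your argument closes; note also that the lemma is stated for the plain convex hull, and all of your steps (including the transitivity $A\in\mathrm{Conv}(G\cdot\xi)\Rightarrow\mathrm{Conv}(G\cdot A)\subset\mathrm{Conv}(G\cdot\xi)$) work without passing to closures, so you should drop the bars rather than prove the weaker closed-hull statement.
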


\

\begin{prop}

\

For all $n>2$, the convex hull of the adjoint orbit of a point $\xi$ in $\Omega$ contains $\Omega$:
$$
\Omega\subset\text{\rm Conv}(G\cdot \xi).
$$
This convex hull is dense in $\mathfrak{sl}(n,\R)$.\\
\end{prop}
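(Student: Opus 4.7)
My plan is to proceed by induction on $n$, using the two preceding lemmas as the base cases $n=3,4$; the density of $\text{Conv}(G\cdot\xi)$ then follows immediately from Lemma 2.4. Fix $n\geq 5$ and assume the statement for all $n'$ with $3\leq n'<n$. By Lemma 2.2 and the $G$-invariance of $\text{Conv}(G\cdot\xi)$, I may assume $\xi\in\Sigma$, and I would then split into two cases according to whether $\xi$ has a real eigenvalue.

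Suppose first that $\xi$ has at least one real eigenvalue $c_1$. Writing $\xi$ in block form $\begin{pmatrix} c_1 & 0\\ 0 & \xi_1\end{pmatrix}$ with $\xi_1\in M_{n-1}(\R)$ of trace $-c_1$ and $n-1$ distinct eigenvalues, the shifted matrix $\tilde\xi_1=\xi_1+\frac{c_1}{n-1}I_{n-1}$ lies in $\Omega_{n-1}$. The subgroup $H=\{\text{diag}(1,h):h\in SL(n-1,\R)\}$ of $G$ fixes the corner entry and acts on $\tilde\xi_1$ by the adjoint action of $SL(n-1,\R)$, so by the induction hypothesis $\text{Conv}(H\cdot\xi)$ contains every matrix of the form $\begin{pmatrix} c_1 & 0\\ 0 & \eta\end{pmatrix}$ with $\eta+\frac{c_1}{n-1}I_{n-1}\in\Omega_{n-1}$. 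Choosing $\eta$ diagonal yields a diagonal point $D(c_1,d_1,\dots,d_{n-1})\in\text{Conv}(G\cdot\xi)$. From such a diagonal point, I would use the $SL(2,\R)$-subgroup embedded in any pair of rows and columns $(i,j)$: since the off-block entries of a diagonal matrix vanish, this conjugation leaves them undisturbed, and the $n=2$ convex hull description recalled above allows the $2\times 2$ principal minor at $(i,j)$ to be replaced in convex hull by any $2\times 2$ real matrix of the same trace --- either a new real diagonal pair or a complex-conjugate block $\begin{pmatrix} a & b\\ -b & a\end{pmatrix}$. Iterating these pair-moves produces every element of $\Sigma$ inside $\text{Conv}(G\cdot\xi)$, and hence, by Lemma 2.1, every generic orbit.

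Suppose now that $\xi$ has no real eigenvalue, so that $n=2s$ is even with $s\geq 3$. I would then invoke the base case $n=4$ via the embedding $SL(4,\R)\hookrightarrow G$ acting on the first four rows and columns: the $4\times 4$ principal block of $\xi$, centred by its trace, lies in $\Omega_4$, and by the $n=4$ lemma its $SL(4,\R)$-convex hull contains all of $\Omega_4$, hence in particular an element with a real eigenvalue. This produces $\xi'\in\text{Conv}(G\cdot\xi)$, generically in $\Omega$, with at least one real eigenvalue; by the $G$-invariance and convexity of $\text{Conv}(G\cdot\xi)$ one has $\text{Conv}(G\cdot\xi')\subset\text{Conv}(G\cdot\xi)$, and the previous case applied to $\xi'$ concludes.

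The main technical obstacle I expect is genericity bookkeeping: the element $\xi'$ produced in the second case must have all $n$ eigenvalues distinct, and the intermediate matrices in the iterated pair-moves of the first case should likewise remain in $\Omega$. Both conditions are open and can be enforced by small perturbations within the convex hulls already constructed, but they must be tracked carefully. A subsidiary issue is the distinction between the $\Sigma^+_{0,n/2}$ and $\Sigma^-_{0,n/2}$ orbits when $r=0$, which is absorbed by the reduction to the real-eigenvalue case, or else handled directly, as in the $n=4$ argument, by combining a sign-flip in one $2\times 2$ block with a sign-corrective change of basis elsewhere so as to remain in $SL(n,\R)$.
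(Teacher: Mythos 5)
Your proposal is correct and follows the same basic strategy as the paper: induction on $n$ with the $n=3,4$ lemmas as base cases, exploitation of the $G$-invariance and convexity of $\text{Conv}(G\cdot\xi)$ to iterate block-subgroup moves, use of the $n=2$ fact $\text{Conv}(SL(2,\R)\cdot D(-c,c))=\mathfrak{sl}(2,\R)$, and, when $\xi$ has no real eigenvalue, passage through the $4\times 4$ principal block (two complex blocks, recentred) to manufacture real eigenvalues before reducing to the other case --- this last step is essentially identical to the paper's $r=0$ case. The only real difference is in the inductive step when a real eigenvalue exists: you peel off the $1\times 1$ real entry, apply the induction hypothesis to the recentred $(n-1)\times(n-1)$ complement to reach a real diagonal point, and then finish with explicit $SL(2,\R)$ pair-moves on pairs of diagonal entries; the paper instead splits off recentred blocks of sizes $2$, $3$ or $4$ (according to $r\geq 2$, $r=1$, $r=0$) and inducts on the complementary block of size $n-2$, $n-3$ or $n-4$, letting the induction hypothesis do the redistribution that your pair-move sweep does by hand. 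Both routes carry the same genericity bookkeeping (each pair acted on must have distinct entries, and the intermediate point fed back into the invariant hull must be generic); one caution: phrase this as choosing the intermediate spectra generically --- which you can, since at each stage you obtain a whole family of points of the hull --- rather than as ``small perturbations within the convex hull'', since the hull itself need not be open.
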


\begin{proof}

\

By induction on $n>4$, suppose that, for all $2<p<n$, this property is true. We consider $D(c_j,a_k+ib_k)\in\Sigma_{r,s}$, with $r+2s=n$.\\

If $r\geq 2$, then $n-2>2$, and we write:
$$
D(c_j,a_k+ib_k)=\left(\begin{matrix} D(c'_1,c'_2)&\\&D(c'_3,\dots,c'_r,a'_k+ib'_k)\end{matrix}\right)+\left(\begin{matrix} \frac{c_1+c_2}{2}I_2&\\&-\frac{c_1+c_2}{n-2}I_{n-2}\end{matrix}\right),
$$
Using the fact that $\mathfrak{sl}(2,\R)=\text{Conv}(G\cdot D(c'_1,c'_2))$ and by the induction hypothesis for $n-2$, we get $\Omega\subset \text{Conv}(G\cdot D(c_j,a_k+ib_k))$.

If $r=1$, we decompose $D(c_1,a_k+ib_k)$ as :
$D(c_1,a_k+ib_k)=$\\
$$
\left(\begin{matrix} D(c'_1,a'_1+ib_1)&\\&D(a'_2+ib'_2,\dots,a'_n+ib'_n)\end{matrix}\right)+\left(\begin{matrix} \frac{c_1+2a_1}{3}I_3&\\&-\frac{c_1+2a_1}{n-3}I_{n-3}\end{matrix}\right).
$$
Then, a matrix $D(c''_1,c''_2,c''_3,a_2+ib_2,\dots,a_n+ib_n)$ belongs to $\text{Conv}(G\cdot D(c_1,a_k+ib_k))$. Therefore, the first case applies, and we still get the result.

If $r=0$, then $s>2$, we decompose $D(a_k+ib_k)$ as :
$D(a_k+ib_k)=$\\
$$
\left(\begin{matrix} D(a'_1+ib_1, a'_2+ib'_2)&\\&D(a'_3+ib'_3,\dots,a'_n+ib'_n)\end{matrix}\right)+\left(\begin{matrix} \frac{2a_1+2a_2}{4}I_4&\\&-\frac{2a_1+2a_2}{n-4}I_{n-4}\end{matrix}\right).
$$
Then a matrix $D(c''_1,\dots,c''_4,a_3+ib_3,\dots,a_n+ib_n)$ belongs to $\text{Conv}(G\cdot D(a_k+ib_k))$. So, the first case applies, and this completes the proof of our proposition.\\

\end{proof}

\begin{cor}

$\mathfrak{sl}(n,\R)$ admits an overalgebra almost separating of degree $n$.\\
\end{cor}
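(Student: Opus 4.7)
The plan is to take the simplest possible overalgebra, namely $V=\R^{n-1}$ with \emph{trivial} $\mathfrak{sl}(n,\R)$-action, so that $\mathfrak g^+=\mathfrak{sl}(n,\R)\ltimes V$ is a Lie algebra in which $V$ is an abelian ideal annihilated by $\mathfrak g$. Identifying $(\mathfrak g^+)^\star$ with $\mathfrak g^\star\oplus V^\star$ and $V^\star$ with $\R^{n-1}$, I would define
$$
\phi(\xi)=\bigl(T_2(\xi),T_3(\xi),\dots,T_n(\xi)\bigr)\in V^\star,\qquad \Phi(\xi)=\xi+\phi(\xi).
$$
This $\phi$ is polynomial of degree $n$ since $T_n(\xi)=\mathrm{Tr}(\xi^n)$ is homogeneous of degree $n$.

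Conditions~1 and~2 I expect to be immediate. Condition~1 holds by construction since $p$ just erases the $V^\star$ component. For condition~2, the trivial action of $V$ on $\mathfrak g^+$ gives $\mathrm{Ad}(\exp v)=\mathrm{id}$ for every $v\in V$, so $\mathrm{Coad}(G^+)$ acts on $\mathfrak g^\star\oplus V^\star$ as $\mathrm{Coad}(G)$ on the first summand and trivially on the second; combined with $G$-invariance of each $T_k$ (so that $\phi$ is constant on coadjoint orbits), this yields $\Phi(\mathrm{Coad}(G)\xi)=\mathrm{Coad}(G^+)\Phi(\xi)=(G\cdot\xi)\times\{\phi(\xi)\}$.

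For condition~3, the closed convex hull of $\Phi(\mathrm{Coad}(G)\xi)$ then equals $\overline{\mathrm{Conv}(G\cdot\xi)}\times\{\phi(\xi)\}$. Here the two preceding results do all the work. By the convex-hull proposition just proved, for $\xi\in\Omega$ and $n>2$ one has $\Omega\subset\mathrm{Conv}(G\cdot\xi)$, and since $\Omega$ is dense in $\mathfrak{sl}(n,\R)$, we get $\overline{\mathrm{Conv}(G\cdot\xi)}=\mathfrak{sl}(n,\R)$ for every generic $\xi$. Consequently, for generic $\xi,\xi'$,
$$
\overline{\mathrm{Conv}}\bigl(\Phi(\mathrm{Coad}(G)\xi)\bigr)=\overline{\mathrm{Conv}}\bigl(\Phi(\mathrm{Coad}(G)\xi')\bigr)
\iff \phi(\xi)=\phi(\xi')
\iff T_k(\xi)=T_k(\xi'),\ \ k=2,\dots,n.
$$
By the separation proposition this last condition is equivalent to $\mathrm{Coad}(G)\xi'$ belonging to a finite family of orbits, namely a singleton when $\xi$ has at least one real eigenvalue ($r>0$, which is automatic for $n$ odd) and two orbits when $r=0$ (forcing $n$ even).

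There is essentially no obstacle left: all the real content has been absorbed into the preceding statements. The only conceptual point is that the deliberately trivial choice of $V$-action makes the $\mathfrak g^\star$-factor of the closed convex hull maximally coarse (it is always the whole space on generic orbits), so that all separating information comes from the invariant-polynomial component $\phi$, which is exactly the optimal separator by the preceding proposition.
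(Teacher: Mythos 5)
Your proposal is correct and coincides with the paper's own proof: the paper likewise takes $\mathfrak g^+=\mathfrak{sl}(n,\R)\times\R^{n-1}$ (trivial action on the abelian factor) with $\phi(\xi)=(T_2(\xi),\dots,T_n(\xi))$, identifies $\overline{Conv}\bigl(\Phi(Coad(G)\xi)\bigr)$ with $\mathfrak{sl}(n,\R)^\star\times\{(T_2(\xi),\dots,T_n(\xi))\}$ via the convex-hull proposition, and concludes by the almost-separation property of the $T_k$. Your verification of conditions 1 and 2 is just a slightly more explicit version of what the paper leaves implicit.
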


\begin{proof}

\

$\mathfrak g=\mathfrak{sl}(n,\R)$ admits an overalgebra of degree $n$, given by :
$$
\aligned
\mathfrak{g}^+&=\mathfrak{sl}(n,\R)\times \R^{n-1}\cr
\Phi:\mathfrak g^\star&\longrightarrow \mathfrak{g}^{+\star},\qquad \Phi(X)=(X,\phi(X))=(X,T_2(X),T_3(X),\dots,T_n(X)).
\endaligned
$$
Indeed, $\phi$ is polynomial, with degree $n$.\\
Moreover, we have for all $\xi$ in $\Omega$,
$$\aligned
\overline{Conv}\left(\Phi(Coad(SL(n,\R))\xi)\right)&=\overline{Conv}(Coad(SL(n,\R))\xi)\times (T_2(\xi),\dots,T_n(\xi))\\
&=\mathfrak{sl}(n,\R)^\star\times (T_2(\xi),\dots,T_n(\xi)).
\endaligned
$$
Then $\Phi(\xi')$ belongs to this set if and only if $T_k(\xi')=T_k(\xi)$ for all $k$, if and only if $C_{\xi'}=C_\xi$.

We saw that if $n$ is odd, $\{\xi',~~\text{ such that } C_{\xi'}=C_\xi\}$ is exactly the orbit $Coad(SL(n,\R))\xi$ and, if $n$ is even, $\{\xi',~~\text{ such that } C_{\xi'}=C_\xi\}$ is either the orbit $Coad(SL(n,\R))\xi$, or, if $C_\xi$ has only non real roots, the set $\{\xi',~~\text{ such that } C_{\xi'}=C_\xi\}$ is the union of two disjoint orbits. This proves that $(\mathfrak g^+,\phi)$ is an overalgebra almost separating of degree $n$ of $\mathfrak{sl}(n,\R)$.

\end{proof}

\


\section{Overalgebra almost separating of degree $p$ of a Lie algebra $\mathfrak g$}


\

\begin{defn}$($Semi direct product$)$
\

Let $G$ be a real Lie group, $V$ a finite dimensional vector space and $(\pi, V)$ a linear representation of $G$. Denote by $G^+=G'\rtimes V$ the Lie group whose set $G\times V$ and low:
$$
(g,v).(g',v')=(gg',v+\pi(g)v').
$$

Its Lie algebra is $\mathfrak g^+=\mathfrak g'\rtimes V$, whose space $\mathfrak g\oplus V$ and bracket :
$$
[(X,u),(X',u')]=([X,X'],\pi'(X)u'-\pi'(X')u).
$$
$($ $\pi'$ is the derivative of $\pi$, $\pi'$ is the representation of $\mathfrak g$ in $V$$)$.
\end{defn}

The exponential map is
$$
\exp(X,u)=(\exp X,\frac{e^{\pi'(X)}-I}{\pi'(X)}u).
$$

We also define the linear map $\psi_u:\mathfrak g~\longrightarrow~V$, by $\psi_u(X)=\pi'(X)u$, for all $u\in V$. Then, the coadjoint action is realized in ${\mathfrak g^+}^\star=\mathfrak g^\star\times V^\star$ and defined by:
$$
Coad'(X,u)(\xi,f)=(Coad'(X)\xi+^t\psi_u(f),-^t\pi'(X)f).
$$

The group action is
$$
Coad(g,v)(\xi,f)=(Coad(g)\xi+^t\psi_v(^t\pi(g^{-1})f),^t\pi(g^{-1})f).
$$
Denote by $\pi^\star(g)=^t\pi(g^{-1})$. 

Let $\Phi:\mathfrak g^\star~\longrightarrow~{\mathfrak g^+}^\star$ be a map non necessarily linear. We assume that $p\circ \Phi=id$, then $\Phi$ is written $\Phi(\xi)=(\xi,\phi(\xi))$. $\phi$ is not necessarily linear.\\

Assume that $\Phi(Coad(G)\xi)=Coad(G^+)\Phi(\xi)$, then : for all $g$ in $G$ and all $v$ in $V$, there exists $g'\in G$ ($g'=g'_{g,v,\xi}$) such that
$$\left\{\aligned
\pi^\star(g)\phi(\xi)&=\phi(Coad(g')\xi),\\
Coad(g)(\xi)+(^t\psi_v\circ \pi^\star(g))\phi(\xi)&=Coad(g')\xi=Coad(g)(\xi)+^t\psi_v\circ \phi(Coad(g')\xi).\\
\endaligned\right.
$$

In particular, if $X$ is in $\mathfrak g$, then $\pi^\star(\exp(tX))(\phi(\xi))=\phi(Coad(g'_t)\xi)$. The continuous curve $t\mapsto\pi^\star(\exp(tX))(\phi(\xi))$ is drawn on the surface $\mathcal C=\phi(Coad(G)\xi)$, its derivative at 0 is the vector ${\pi^\star}'(X)\phi(\xi)$. This vector belongs to the tangent space
$$
T_{\phi(\xi)}(\mathcal C)=\phi'(\phi(\xi))(T_\xi(Coad(G)\xi))=\phi'(\phi(\xi))(Coad(\mathfrak g)\xi).
$$

We have also, for the same $g$ in $G$, $v$ in $V$, and $g'=g'_{g,v,\xi}\in G$,
$$
Coad(g)(\xi)=(I-^t\psi_v\circ\phi)(Coad(g')\xi).
$$
We deduce that, if $v=0$, then $Coad(g)(\xi)=Coad(g'_{g,0,\xi})(\xi)$ and therefore $\pi^\star(g)\phi(\xi)=\phi(Coad(g)\xi)$. So:

\begin{lem}
\

$\phi$ is an intertwining (non linear) between the coadjoint representation and the representation $\pi^\star$.\\
\end{lem}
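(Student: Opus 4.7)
The plan is to extract the intertwining relation directly from the orbit equivariance hypothesis $\Phi(Coad(G)\xi) = Coad(G^+)\Phi(\xi)$ by specializing to elements of the subgroup of $G^+ = G \rtimes V$ of the form $(g,0)$, where the cross-term ${}^t\psi_v$ drops out and the two components of the coadjoint action decouple.

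Concretely, I would first apply the equivariance hypothesis to $(g,0)$: this produces some $g' = g'_{g,0,\xi} \in G$ such that $Coad(g,0)\Phi(\xi) = \Phi(Coad(g')\xi)$. Next, I would unpack both sides using the explicit formulas. With $v=0$ one has ${}^t\psi_v \equiv 0$, so
$$
Coad(g,0)(\xi,\phi(\xi)) \;=\; (Coad(g)\xi,\; \pi^\star(g)\phi(\xi)),
$$
while the assumption $p\circ\Phi = id$ gives $\Phi(Coad(g')\xi) = (Coad(g')\xi,\; \phi(Coad(g')\xi))$. Comparing the $\mathfrak g^\star$-components forces $Coad(g')\xi = Coad(g)\xi$; substituting this equality into the $V^\star$-component then yields
$$
\pi^\star(g)\phi(\xi) \;=\; \phi(Coad(g')\xi) \;=\; \phi(Coad(g)\xi),
$$
which is exactly the intertwining identity $\phi\circ Coad(g) = \pi^\star(g)\circ \phi$, valid for every $g\in G$ and every $\xi\in\mathfrak g^\star$.

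There is essentially no obstacle here: the argument is already laid out in the paragraph preceding the statement, and the lemma amounts to recording it as a self-standing assertion. The only subtle point worth highlighting is that the auxiliary element $g'$ a priori depends on $g$, $v$, $\xi$ and is not uniquely determined by the hypothesis; however, the $\mathfrak g^\star$-component of the equivariance equation with $v=0$ pins down $Coad(g')\xi$ to equal $Coad(g)\xi$ in $\mathfrak g^\star$, so the value of $\phi$ at this common point is unambiguous and no further control on the choice of $g'$ is needed.
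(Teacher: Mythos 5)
Your proposal is correct and takes essentially the same route as the paper: the paper writes out the equivariance equations for a general element $(g,v)$ of $G^+$ and then sets $v=0$, at which point the cross term ${}^t\psi_v$ vanishes, the $\mathfrak g^\star$-components force $Coad(g')\xi = Coad(g)\xi$, and the $V^\star$-component gives $\pi^\star(g)\phi(\xi)=\phi(Coad(g)\xi)$. Your observation that no control on the choice of $g'$ is needed beyond pinning down $Coad(g')\xi$ matches the paper's implicit reasoning.
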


If $\phi$ is polynomial of degree $p$, then $\phi$ is written :
$$
\phi(\xi)=\phi_1(\xi)+\phi_2(\xi)+\dots+\phi_p(\xi),
$$
with $\phi_k$ homogeneous of degree $k$.

Since $\phi$ is an intertwining, then $\phi\circ Ad_g=\pi^\star(g)\circ \phi$, and for all $k$, $\phi_k \circ Ad_g=\pi^\star(g)\circ \phi_k$, i.e each $\phi_k$ is an intertwining.\\

On the other hand, for each $k$, $\phi_k(\xi)$ can be written 
$$
\phi_k(\xi)=b_k(\underbrace{\xi\cdot \ldots \cdot\xi}_k),
$$
where $b_k$ is a linear map from $S^k(\mathfrak g^\star)$ in $V^\star$. The map $b_k$ is also an intertwining, since the action $Coad^k$ of $G$ on $S^k(\mathfrak g^\star)$ is such that :
$$
\phi_k(Coad(g)\xi)=b_k(Coad(g)\xi\cdot \ldots\cdot Coad(g)\xi)=(b_k\circ Coad^k(g))(\xi\cdot\ldots\cdot\xi).
$$

Put then:
$$
S_p(\mathfrak g^\star)=\mathfrak g^\star\oplus S^2(\mathfrak g^\star)\oplus\dots\oplus S^p(\mathfrak g^\star),
$$
and 
$$
b:~S_p(\mathfrak g^\star)~\longrightarrow ~~ V^\star,~~b(v_1+v_2+\dots+v_p)=b_1(v_1)+\dots+b_p(v_p).
$$
Let $U=b(S_p(\mathfrak g^\star))$. $U$ is a submodule of $V^\star$, isomorphic to the quotient module $S_p(\mathfrak g^\star)/\ker(b)$. Put then $W=V/U^\perp$. $W$ is a quotient module of the module $V$ such that $W^\star\simeq U$ ( and then $W\simeq U^\star$).\\

\begin{lem}

\

If $(\mathfrak g\rtimes V,\phi)$ is an overalgebra almost separating of $\mathfrak g$, then $(\mathfrak g\rtimes W,\tilde{\phi})$, where 
$$
\tilde{\phi}(\xi)=b(\xi+\xi\cdot \xi+\dots+\xi\cdot\ldots\cdot\xi)
$$
is also an overalgebra almost separating of $\mathfrak g$.\\
\end{lem}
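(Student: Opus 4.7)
The plan is to show that the orbit $Coad(G^+)\tilde\Phi(\xi)$ in $\mathfrak g^\star\oplus W^\star$ coincides with $Coad(G^+)\Phi(\xi)$ in $\mathfrak g^\star\oplus V^\star$, via the canonical inclusion $\mathfrak g^\star\oplus W^\star\hookrightarrow\mathfrak g^\star\oplus V^\star$. Once this orbit identification is established, all three defining properties of an almost separating overalgebra transfer directly from $(\mathfrak g\rtimes V,\phi)$ to $(\mathfrak g\rtimes W,\tilde\phi)$.

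First I verify the construction is well defined. Since each $b_k$ is an intertwining, so is $b:S_p(\mathfrak g^\star)\to V^\star$, and its image $U$ is a $\pi^\star$-invariant submodule of $V^\star$. Dually, $U^\perp\subset V$ is $\pi$-invariant, so $\pi$ descends to a representation $\tilde\pi$ of $G$ on $W=V/U^\perp$, and $\mathfrak g\rtimes W$ is a well defined semi-direct product Lie algebra. Under the canonical identification $W^\star\simeq(U^\perp)^\perp=U\subset V^\star$, the map $\tilde\phi$ is simply $\phi$ viewed as taking values in $U$ rather than in $V^\star$. Hence $\tilde\phi$ is polynomial of degree $p$; and since the identification $W^\star\simeq U$ is $G$-equivariant, $\tilde\phi$ inherits from $\phi$ the intertwining property $\tilde\phi\circ Ad_g=\tilde\pi^\star(g)\circ\tilde\phi$. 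Property 1, $p\circ\tilde\Phi=id_{\mathfrak g^\star}$, is immediate from $\tilde\Phi(\xi)=(\xi,\tilde\phi(\xi))$.

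The main step is to match the orbits. The $G$-factor of the coadjoint action sends $(\xi,f)$ to $(Coad(g)\xi,\pi^\star(g)f)$ in $(\mathfrak g\rtimes V)^\star$ and to $(Coad(g)\xi,\tilde\pi^\star(g)f)$ in $(\mathfrak g\rtimes W)^\star$; on $\mathfrak g^\star\oplus U$ these agree, since by construction $\tilde\pi^\star$ is the restriction of $\pi^\star$ to $U$. For the $V$-part the action is $(\xi,f)\mapsto(\xi+{}^t\psi_v(f),f)$, and the key observation is that when $f\in U$ the shift ${}^t\psi_v(f)$ depends only on the class $\bar v\in W$. Indeed, if $v\in U^\perp$ then $\pi'(X)v\in U^\perp$ for every $X\in\mathfrak g$, so $\langle X,{}^t\psi_v(f)\rangle=\langle\pi'(X)v,f\rangle=0$ for every $f\in U$. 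Thus the $V$-action on the invariant subset $\mathfrak g^\star\oplus U$ factors through $W$ and coincides with the action defined via $\tilde\psi$. Consequently $Coad(G^+)\Phi(\xi)=Coad(G^+)\tilde\Phi(\xi)$ as subsets of $\mathfrak g^\star\oplus V^\star$, which yields property 2 for $\tilde\Phi$.

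Taking closed convex hulls preserves this equality, hence $\overline{Conv}(\tilde\Phi(Coad(G)\xi))=\overline{Conv}(\tilde\Phi(Coad(G)\xi'))$ holds if and only if the analogous equality holds for $\Phi$; by hypothesis, when $\xi$ is generic this forces $Coad(G)\xi'$ to belong to a finite family of coadjoint orbits. This is property 3, completing the argument. The one delicate point is the bookkeeping around the identification $W^\star=U$ and the verification that the $V$-action factors through $W$ on the relevant invariant subset; once these identifications are fixed, the rest is a direct transfer from the hypothesis on $(\mathfrak g\rtimes V,\phi)$.
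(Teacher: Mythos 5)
Your argument is correct and follows essentially the same route as the paper: identify $W^\star$ with the submodule $U\subset V^\star$, view $\tilde\phi$ as $\phi$ composed with this identification, and transfer the closed convex hulls of the images of the orbits through the resulting linear injection $j(\xi,v)=(\xi,\iota(v))$. You additionally spell out why the orbit condition (property 2) transfers -- checking that $\mathfrak g^\star\oplus U$ is invariant and that the translations ${}^t\psi_v(f)$, $f\in U$, depend only on the class of $v$ in $W=V/U^\perp$ -- a verification the paper leaves implicit, so no gap.
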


\begin{proof}

\

In the statement of this lemma, we identify $W^\star$ with the submodule $U$ of $V^\star$. With this identification, if $\iota$ is the canonical injection of $U$ in $V^\star$, then $\phi(\xi)=\iota\circ\tilde{\phi}(\xi)$. The application $\Phi$ becomes $\tilde{\Phi}(\xi)=(\xi,\iota\circ\phi(\xi))=(j\circ\Phi)(\xi)$ if $j(\xi,v)=(\xi,\iota(v))$. Therefore
$$
\overline{Conv}\left(\tilde{\Phi} (Coad G~\xi)\right)=j\left(\overline{Conv}\left(\Phi (Coad G~\xi)\right)\right),
$$
and $\overline{Conv}\left(\tilde{\Phi} (Coad G~\xi)\right)=\overline{Conv}\left(\tilde{\Phi} (Coad G~\xi')\right)$ if and only if $\overline{Conv}\left(\Phi (Coad G~\xi)\right)=\overline{Conv}\left(\Phi (Coad G~\xi')\right)$. We deduce that $(\mathfrak g\rtimes W,\tilde{\phi})$ or, if we prefer, $(\mathfrak g\rtimes \left(S_p(\mathfrak g^\star)/\ker b\right)^\star,\tilde{\phi})$ is an overalgebra almost separating of $\mathfrak g$.\\

\end{proof}

If $\mathfrak g$ is semi-simple and deployed, then all its representations are completely reducible. Therefore $W^\star=S_p(\mathfrak g^\star)/\ker b$ is isomorphic to a submodule of $S_p(\mathfrak g^\star)=S_p(\mathfrak g)$. In this case, $W$ is isomorphic to a submodule of $\left(S_p(\mathfrak g)\right)^\star$. So, we consider the application $\phi$ with values in $S_p(\mathfrak g)$, and $\phi$ becomes :
$$
\phi(\xi)=b_1(\xi)+\dots+b_p(\xi\cdot\ldots\cdot\xi).
$$
The application $b$ becomes an intertwining of modules $S_p(\mathfrak g)$.

\begin{cor}

\

If $\mathfrak g$ is a deployed and semi-simple Lie algebra, admitting an overalgebra almost separating of degree $p$, and $\tau$ a natural application from $\mathfrak g=\mathfrak g^\star$ to $S_p(\mathfrak g)$ defined by : $\tau(\xi)=\xi+\xi\cdot\xi+ \dots +\xi\cdot\ldots\cdot\xi$, then there exists an intertwining $b$ of $S_p(\mathfrak g)$ such that $(\mathfrak g\rtimes \left(S_p(\mathfrak g)\right)^\star,b\circ\tau)$ is an overalgebra almost separating of $\mathfrak g$.

\end{cor}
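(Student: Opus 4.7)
The plan is to bootstrap from the preceding lemma. Starting with an overalgebra almost separating $(\mathfrak g\rtimes V,\phi)$ of degree $p$, the discussion before the corollary already produces an intertwining $b:S_p(\mathfrak g^\star)\to V^\star$ and, via the lemma, reduces the problem to $(\mathfrak g\rtimes W,\tilde\phi)$ with $W^\star\simeq b(S_p(\mathfrak g^\star))\simeq S_p(\mathfrak g^\star)/\ker b$. Since $\mathfrak g$ is semi-simple and deployed, complete reducibility yields a $\mathfrak g$-stable decomposition $S_p(\mathfrak g^\star)=\ker b\oplus M$ with $b|_M$ an isomorphism; thus $W^\star$ embeds as a submodule of $S_p(\mathfrak g^\star)\simeq S_p(\mathfrak g)$ (identification via the Killing form). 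I denote this injective intertwining by $\iota:W^\star\hookrightarrow S_p(\mathfrak g)$ and set $\tilde b=\iota\circ b:S_p(\mathfrak g)\to S_p(\mathfrak g)$.

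I then claim $(\mathfrak g\rtimes S_p(\mathfrak g)^\star,\tilde b\circ\tau)$ is the overalgebra sought, with $\Phi'(\xi)=(\xi,\tilde b\circ\tau(\xi))\in\mathfrak g^\star\oplus S_p(\mathfrak g)$. Condition (1) is immediate, and (2) holds because $\tilde b\circ\tau=\iota\circ\tilde\phi$ is a composition of intertwinings. The crucial point is (3), for which I dualize $\iota$ to a surjective $\mathfrak g$-equivariant map $\pi:S_p(\mathfrak g)^\star\twoheadrightarrow W$, extend it to a surjective Lie algebra morphism $\tilde\pi:\mathfrak g\rtimes S_p(\mathfrak g)^\star\twoheadrightarrow\mathfrak g\rtimes W$ whose kernel $K=\{0\}\oplus\ker\pi$ is an ideal, and observe that the transpose $\tilde\pi^\star:(\mathfrak g\rtimes W)^\star\hookrightarrow(\mathfrak g\rtimes S_p(\mathfrak g)^\star)^\star$ is a linear homeomorphism onto the annihilator $K^\perp$, intertwining the coadjoint actions. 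By construction $\Phi'=\tilde\pi^\star\circ\tilde\Phi$.

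Because $\tilde\pi^\star$ is linear, continuous and injective with closed image $K^\perp$, it commutes with closed convex hulls: $\overline{Conv}(\Phi'(Coad(G)\xi))=\tilde\pi^\star\!\left(\overline{Conv}(\tilde\Phi(Coad(G)\xi))\right)$. Moreover, as $K$ is an ideal, $\tilde\pi^\star$ carries coadjoint orbits of $G\rtimes W$ bijectively onto those coadjoint orbits of $G\rtimes S_p(\mathfrak g)^\star$ contained in $K^\perp$. Therefore equality of closed convex hulls for a pair of generic $\xi,\xi'$ on the $\Phi'$-side is equivalent to the analogous equality on the $\tilde\Phi$-side, and the almost-separating property transfers from $(\mathfrak g\rtimes W,\tilde\phi)$ to $(\mathfrak g\rtimes S_p(\mathfrak g)^\star,\tilde b\circ\tau)$. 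The main obstacle is precisely this transfer principle via $\tilde\pi^\star$: one must verify that coadjoint orbits in $K^\perp$ for the larger Lie algebra correspond exactly to coadjoint orbits for the quotient $\mathfrak g\rtimes W$ (which uses $K$ being an ideal) and that $\tilde\pi^\star$ behaves well under closed convex hull; once these are in place, the corollary follows at once from the lemma.
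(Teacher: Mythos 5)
Your proposal is correct and follows essentially the same route as the paper: reduce via the preceding lemma to $(\mathfrak g\rtimes W,\tilde\phi)$, use complete reducibility of the deployed semi-simple $\mathfrak g$ to embed $W^\star\simeq S_p(\mathfrak g^\star)/\ker b$ as a submodule of $S_p(\mathfrak g)$, and transfer the almost-separating property along the resulting injection. Your extra machinery (the quotient morphism $\tilde\pi$ with ideal kernel $K$, the orbit correspondence on $K^\perp$, and the compatibility of $\tilde\pi^\star$ with closed convex hulls) merely makes explicit the transfer of conditions (2) and (3) that the paper leaves implicit, so no gap remains -- though note it is this orbit correspondence, rather than $\phi'$ being a composition of intertwinings, that actually delivers condition (2).
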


\section{The case $\mathfrak g=\mathfrak{sl}(n,\R)$ and $p=2$}


\


\subsection{The case $\mathfrak g=\mathfrak{sl}(n,\R)$}


\

We suppose now $\mathfrak g=\mathfrak{sl}(n,\R)$. Recall the usual notations ({\sl{cf.}} \cite{FH}).

$\mathfrak{sl}(n,\R)$ is a real simple algebra. A Cartan subalgebra $\mathfrak h$ of $\mathfrak{sl}(n,\R)$, of dimension $n-1$, is given by the set of diagonal matrices $\xi=D(c_1,\dots,c_n)$. With this Cartan algebra, $\mathfrak{sl}(n,\R)$ is deployed. For $H\in\mathfrak h$, we note $L_i(H)=c_i$, with $L_1+\dots+L_n=0$. We choose the usual system of simple roots, i.e the forms $\alpha_i=L_i-L_{i+1}$ ($1\leq i\leq n-1$). The system of positive roots is the set of forms $L_i-L_j$, with $i<j$. If $e_{ij}=\left(x_{rs}\right)$ is the $n\times n$ matrix such that $x_{rs}=\delta_{ri}\delta_{sj}$, then for all $H$ in $\mathfrak h$,
$$
\text{ad}(H)e_{ij}=\left\{\aligned&(\alpha_i+\dots+\alpha_{j-1})(H)e_{ij},&\quad\text{ if }i<j\\
&-(\alpha_j+\dots+\alpha_{i-1})(H)e_{ij},&\quad\text{ if }i>j.\endaligned\right.
$$
The fundamental weights are $\omega_k=L_1+\dots+L_k$ ($1\leq k\leq n-1$) and the simple modules are exactly the modules noted $\Gamma_{a_1\dots a_{n-1}}$ of highest weight $a_1\omega_1+\dots+a_{n-1}\omega_{n-1}$, with $a_k$ integer. Moreover, the dual of $\Gamma_{a_1,\dots,a_{n-1}}$ is the module $\Gamma_{a_{n-1},\dots,a_1}$. ({\sl{cf.}} \cite{FH}).\\

Let $X\mapsto X^s$ be the symmetry operation relative to the second diagonal given by:

if $X$ is the matrix $(x_{ij})$, then $X^s$ is the matrix $(x^s_{ij})$ with :
$$
x^s_{ij}=x_{(n+1-j)(n+1-i)},
$$
The operation $s$ leaves the Cartan subalgebra invariant. For all weight $\omega$, put $\omega^s(H)=\omega(H^s)$. In particular, $L_i^s=L_{n+1-i}$ and $\omega_j^s=-\omega_{n-j}.$\\
Moreover, $s$ permutes the radiciel spaces, since $e_{ij}^s=e_{(n+1-j)(n+1-i)}$, or, if $H$ belongs to $\mathfrak h$ and $i<j$,
$$
[H,e_{ij}^s]=-(L_i^s-L_j^s)(H)e_{ij}^s.
$$

We consider now the module $S^k(\mathfrak{sl}(n,\R))$ i.e the space of the sums $\sum_{i_1<\dots<i_k} \lambda_{i_1,\dots,i_k}X_{i_1}\cdot\ldots\cdot X_{i_k}$ on which $\mathfrak{sl}(n,\R)$ acts by the adjoint action $ad$ defined by:
$$
ad_X(X_{i_1}\cdot\ldots\cdot X_{i_k})=\sum_{r=1}^k X_{i_1}\cdot\ldots\cdot[X,X_{i_r}]\cdot\ldots\cdot X_{i_k}.
$$
\begin{lem}
\

The space $S^k(\mathfrak{sl}(n,\R))$ is self-dual i.e $\left(S^k(\mathfrak{sl}(n,\R))\right)^\star=S^k(\mathfrak{sl}(n,\R))$.
\end{lem}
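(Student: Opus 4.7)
The plan is to exploit the non-degenerate invariant bilinear form on $\mathfrak{sl}(n,\R)$ already introduced in Section 2.1, namely $\langle X,Y\rangle = Tr(XY)$, and to push it through the $k$-th symmetric power functor. Since this form is invariant under the adjoint action, it gives an isomorphism of $\mathfrak{sl}(n,\R)$-modules $\mathfrak{sl}(n,\R) \simeq \mathfrak{sl}(n,\R)^\star$. This is exactly the identification already used by the author to conflate the adjoint and coadjoint actions. So the heart of the argument is to lift this self-duality from the adjoint representation to all of its symmetric powers.

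First, I would extend the trace form to a symmetric bilinear form on $S^k(\mathfrak{sl}(n,\R))$ by the standard recipe
\[
\langle X_1\cdot\ldots\cdot X_k,\,Y_1\cdot\ldots\cdot Y_k\rangle_k \;=\; \frac{1}{k!}\sum_{\sigma\in\mathfrak{S}_k} \langle X_1,Y_{\sigma(1)}\rangle\cdots\langle X_k,Y_{\sigma(k)}\rangle,
\]
and extend bilinearly. Non-degeneracy of $\langle\cdot,\cdot\rangle_k$ follows from non-degeneracy of $\langle\cdot,\cdot\rangle$ on $\mathfrak{sl}(n,\R)$: choosing a basis $(X_i)$ of $\mathfrak{sl}(n,\R)$ and its dual basis $(X_i^\vee)$ under the trace form, the monomials $X_{i_1}\cdot\ldots\cdot X_{i_k}$ and their analogues built from $(X_i^\vee)$ produce dual bases of $S^k(\mathfrak{sl}(n,\R))$ under $\langle\cdot,\cdot\rangle_k$. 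Invariance under $\mathrm{ad}$ follows from the Leibniz rule: applying $\mathrm{ad}_X$ to one factor in each slot and using $\langle [X,Y],Z\rangle + \langle Y,[X,Z]\rangle=0$ yields cancellation term by term.

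Consequently, the map
\[
\Psi: S^k(\mathfrak{sl}(n,\R))\longrightarrow \bigl(S^k(\mathfrak{sl}(n,\R))\bigr)^\star,\qquad \Psi(u)(v)=\langle u,v\rangle_k
\]
is an $\mathfrak{sl}(n,\R)$-equivariant linear isomorphism, giving the claimed self-duality. Equivalently, the argument can be packaged abstractly: the functor $S^k$ respects module isomorphisms, the adjoint module satisfies $\mathfrak{sl}(n,\R)\simeq \mathfrak{sl}(n,\R)^\star$ via $\langle\cdot,\cdot\rangle$, and one has the natural module isomorphism $S^k(V^\star)\simeq (S^kV)^\star$ for any finite-dimensional module $V$ (by the symmetrization pairing). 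Chaining these isomorphisms gives $(S^k(\mathfrak{sl}(n,\R)))^\star \simeq S^k(\mathfrak{sl}(n,\R)^\star) \simeq S^k(\mathfrak{sl}(n,\R))$.

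There is no real obstacle: everything is routine multilinear algebra once the trace form is in place. The only point requiring care is making sure each intermediate isomorphism commutes with the $\mathfrak{sl}(n,\R)$-action, which ultimately rests on the single identity $\langle [X,Y],Z\rangle = -\langle Y,[X,Z]\rangle$, i.e.\ the $\mathrm{ad}$-invariance of the trace form on $\mathfrak{sl}(n,\R)$ itself. Thus the self-duality of $S^k(\mathfrak{sl}(n,\R))$ is a direct consequence of the self-duality of the adjoint representation.
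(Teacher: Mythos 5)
Your proof is correct, but it follows a genuinely different route from the paper's. You lift the self-duality of the adjoint module through the symmetric power functor: the trace form $\langle X,Y\rangle=Tr(XY)$ gives an equivariant isomorphism $\mathfrak{sl}(n,\R)\simeq\mathfrak{sl}(n,\R)^\star$, and the induced non-degenerate invariant pairing on $S^k$ (equivalently, the natural isomorphism $S^k(V^\star)\simeq(S^kV)^\star$ together with $V\simeq V^\star$) yields $(S^k(\mathfrak{sl}(n,\R)))^\star\simeq S^k(\mathfrak{sl}(n,\R))$; your verifications of non-degeneracy via dual bases and of $\mathrm{ad}$-invariance via the Leibniz rule are sound, and the argument works verbatim for any module carrying a non-degenerate invariant form. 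The paper argues instead at the level of simple constituents: it uses the anti-diagonal transpose $X\mapsto X^s$, which sends a highest weight vector $v_{a_1\dots a_{n-1}}$ of $S^k$ to a highest weight vector of weight $a_{n-1}\omega_1+\dots+a_1\omega_{n-1}$, so whenever $\Gamma_{a_1,\dots,a_{n-1}}$ occurs in $S^k(\mathfrak{sl}(n,\R))$ its dual $\Gamma_{a_{n-1},\dots,a_1}$ occurs as well, and self-duality follows from complete reducibility. Your argument is shorter and more general; the paper's buys an explicit involution $s$ realizing the duality on weight vectors, which the author exploits later (for instance to obtain $w_{012}=w_{210}^s$ in the decomposition of $S^3(\mathfrak{sl}(4,\R))$), something your abstract isomorphism does not directly provide.
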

\begin{proof}
\

Suppose that the module $\Gamma_{a_1,\dots,a_{n-1}}$ appears in $S^k(\mathfrak{sl}(n,\R))$. Then, there is a non zero vector $v_{a_1,\dots,a_{n-1}}$ such that , for all $H$ in $\mathfrak h$, and for all $i<j$,
$$
ad_H(v_{a_1\dots a_{n-1}})=(a_1\omega_1+\dots+a_{n-1}\omega_{n-1})(H)v_{a_1\dots a_{n-1}},\quad\quad ad_{e_{ij}}(v_{a_1\dots a_{n-1}})=0.
$$
If $v=\sum_{i_1<\dots<i_k} \lambda_{i_1,\dots,i_k}X_{i_1}\cdot\ldots\cdot X_{i_k}$ is a vector of $S^k(\mathfrak{sl}(n,\R))$, then $v^s=\sum_{i_1<\dots<i_k} \lambda_{i_1,\dots,i_k}X^s_{i_1}\cdot\ldots\cdot X^s_{i_k}$. Moreover, the map $v\mapsto v^s$ is an involutive bijection : $(v^s)^s=v$.

The vector $v^s_{a_1\dots a_{n-1}}$ is not zero and, for all $H$ in $\mathfrak h$, and all $i<j$,
$$\aligned
ad_H(v^s_{a_1\dots a_{n-1}})&=-(a_1\omega_1+\dots+a_{n-1}\omega_{n-1})^s(H)v^s_{a_1\dots a_{n-1}}\\
&=(a_{n-1}\omega_1+\dots+a_1\omega_{n-1})(H)v^s_{a_1\dots a_{n-1}},\\ 
ad_{e_{ij}}(v^s_{a_1\dots a_{n-1}})&=0.\\
\endaligned
$$

In other words,
$$
\left(\Gamma_{a_1,\dots,a_{n-1}}\right)^s=\Gamma_{a_{n-1},\dots,a_1}\simeq\left(\Gamma_{a_1,\dots,a_{n-1}}\right)^\star.
$$
\end{proof}

\begin{cor}

\

If $\mathfrak{sl}(n,\R)$ admits an overalgebra almost separating of degree $p$, if $\tau$ is the natural application of $\mathfrak{sl}(n,\R)^\star$ in $S_p(\mathfrak{sl}(n,\R))=\left(S_p(\mathfrak{sl}(n,\R))\right)^\star$ given by : $\tau(\xi)=\xi+\xi\cdot\xi+ \dots +\xi\cdot\ldots\cdot\xi$, then there exist an intertwining $b_k$ of $S^k(\mathfrak{sl}(n,\R))$ ($k=1,\dots,p$) such that $(\mathfrak{sl}(n,\R)\rtimes S_p(\mathfrak{sl}(n,\R)),\sum_kb_k\circ\tau)$ is an overalgebra almost separating of the Lie algebra $\mathfrak{sl}(n,\R)$.\\

\end{cor}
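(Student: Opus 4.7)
The plan is to read the statement as a two-step refinement of the preceding Corollary: first replacing the abstract $(S_p(\mathfrak{g}))^\star$ by $S_p(\mathfrak{g})$ via the self-duality just proved, then refining the single intertwining $b$ into graded components $b_k$, one per symmetric power. Since $\mathfrak{sl}(n,\R)$ is simple (hence semi-simple) and is deployed by the Cartan subalgebra of traceless diagonal matrices recalled at the start of Section 5, the hypotheses of the Section 4 Corollary apply verbatim.

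First I would invoke that Corollary to produce an intertwining $b$ of $S_p(\mathfrak{sl}(n,\R))$ such that $(\mathfrak{sl}(n,\R)\rtimes (S_p(\mathfrak{sl}(n,\R)))^\star,\, b\circ\tau)$ is an overalgebra almost separating of $\mathfrak{sl}(n,\R)$. Then, applying the self-duality Lemma summand by summand gives an $\mathfrak{sl}(n,\R)$-equivariant identification $(S_p(\mathfrak{sl}(n,\R)))^\star\simeq S_p(\mathfrak{sl}(n,\R))$, which rewrites the semi-direct factor as $\mathfrak{sl}(n,\R)\rtimes S_p(\mathfrak{sl}(n,\R))$, exactly the form required.

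To produce the graded components $b_k$, I would unwind the polynomial expression of $\phi$. The decomposition $\phi=\phi_1+\dots+\phi_p$ into homogeneous intertwining pieces, together with the factorisation $\phi_k(\xi)=b_k(\xi\cdot\ldots\cdot\xi)$ with $b_k$ a linear equivariant map from $S^k(\mathfrak{sl}(n,\R)^\star)$ to $V^\star$ (both points established in Section 4), reduces matters to recognising each image $b_k(S^k(\mathfrak{sl}(n,\R)^\star))$ inside $V^\star$. By complete reducibility of $\mathfrak{sl}(n,\R)$-modules and the self-duality Lemma applied in degree $k$, this image is $\mathfrak{sl}(n,\R)$-isomorphic to a submodule of $S^k(\mathfrak{sl}(n,\R))$, so each $b_k$ can be realised as an intertwining of $S^k(\mathfrak{sl}(n,\R))$ into itself. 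Their sum $\sum_k b_k\circ\tau$ then takes values in $S_p(\mathfrak{sl}(n,\R))$ and agrees, after identification, with the $b\circ\tau$ delivered by the first step.

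The one delicate point, and the place where I would be most careful, is verifying that the succession of identifications $V\rightsquigarrow W=V/U^{\perp}\rightsquigarrow (S_p(\mathfrak{sl}(n,\R)))^\star$ together with the degree-by-degree passage $V^\star\supseteq \text{Im}(b_k)\simeq$ submodule of $S^k(\mathfrak{sl}(n,\R))$, leaves the closed convex hulls $\overline{Conv}(\Phi(Coad(G)\xi))$ unchanged up to equivariant isomorphism, so that the almost-separation property is preserved throughout. This is exactly what the map $j(\xi,v)=(\xi,\iota(v))$ argument in the proof of the Section 4 Corollary guarantees: each step is induced by an injective equivariant linear map on the $V^\star$ factor, so closed convex hulls are transported bijectively, and almost-separation is carried along. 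Assembling the pieces then yields the stated overalgebra.
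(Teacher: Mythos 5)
Your proof is correct and follows precisely the route the paper intends: the paper states this corollary without a separate proof, as an immediate combination of the Section 4 corollary (for a deployed semi-simple $\mathfrak g$, applied here to the simple, deployed $\mathfrak{sl}(n,\R)$) with the self-duality lemma for $S^k(\mathfrak{sl}(n,\R))$, using the homogeneous decomposition $\phi_k(\xi)=b_k(\xi\cdot\ldots\cdot\xi)$ and the $j$-type transport of closed convex hulls already established in Section 4. Your reconstruction is, if anything, more explicit than the paper's own treatment.
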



\subsection{The case $p=2$}


\

For $k=1$ and $k=2$, looking for the intertwining between $S^k(\mathfrak{sl}(n,\R)^\star)$ and $\left(S^k(\mathfrak{sl}(n,\R))\right)^\star$.\\
For $k=1$, the space of these intertwining is one dimensional and generated by $P_0$ defined by :
$$
\langle P_0(\xi),X\rangle=Tr(\xi X).
$$
Therefore, any intertwining $b_1$ is written $b_1=a_0P_0$, with $a_0$ real. For k=2:

\subsubsection{Decomposition of $S^2(\mathfrak g)$}

\

The module $S^2(\mathfrak g)$ is the sum of three or four irreducible modules, all of different types. Recall the usual notations ({\sl{cf.}} \cite{FH}).
\begin{itemize}
\item The highest weight vector in $S^2(\mathfrak{sl}(n,\R))$ is 
$$
v_{20\dots 02}=e_{1n}.e_{1n}.
$$ 
The weight of this vector is $2\omega_1+2\omega_n=2L_1-2L_n$. Then, we deduce the existence of a simple module $\Gamma_{20\dots 02}$ of dimension ({\sl{cf.}} \cite{FH}):
$$
\dim \Gamma_{20\dots 02}=\displaystyle\prod_{i=2}^{n-1}\frac{2+n-i}{n-i}.\displaystyle\prod_{j=2}^{n-1}\frac{2+j-1}{j-1}.\frac{4+n-1}{n-1}
=\frac{n^2(n-1)(n+3)}{4}.
$$ 
\item Among the weight vectors of weight $\omega_2+\omega_{n-2}=L_1-L_n+L_2-L_{n-1}$, and if $n>3$, there is one that is annulled by the action of $e_{i(i+1)}$, $1\leq i\leq (n-1)$. This weight vector is :
$$
v_{010\dots 010}=e_{2n}.e_{1(n-1)}-e_{2(n-1)}.e_{1n}.
$$
We deduce then the existence of a simple module $\Gamma_{010\dots 010}$ of dimension:
$\dim \Gamma_{010\dots 010}=$\\
$$
\begin{array}{lll}
&&\displaystyle\prod_{j=3}^{n-2}\frac{1+(j-1)}{j-1}.\displaystyle\prod_{j=3}^{n-2}\frac{1+(j-2)}{j-2}.\displaystyle\prod_{i=3}^{n-2}\frac{1+(n-1)-i}{n-1-i}.\displaystyle\prod_{i=3}^{n-2}\frac{1+n-i}{n-i}.\frac{n^2(n+1)}{4(n-2)^2(n-3)}\\
&&=\frac{n^2(n+1)(n-3)}{4}
\end{array}
$$
If $n=3$, $e_{22}$ is not in $\mathfrak{sl}(3)$, and this dimension is 0. This sub module does not appear. 
\item Among the weight vectors of weight $\omega_1+\omega_{n-1}=L_1-L_n$, there is one that is annulled by the action of $e_{i(i+1)}$, $1\leq i\leq (n-1)$. This weight vector is:
$$
v_{10\dots 01}=\displaystyle\sum_{i=1}^{n}e_{1i}.e_{in}-\frac{2}{n}\displaystyle\sum_{j=1}^{n}e_{jj}.e_{1n}.
$$
Then, we deduce the existence of a simple module $\Gamma_{10\dots 01}$ of dimension: 
$$
\dim \Gamma_{10\dots 01}=\displaystyle\prod_{i=2}^{n-1}\frac{1+n-i}{n-i}.\displaystyle\prod_{j=2}^{n-1}\frac{1+j-1}{j-1}.\frac{2+n-1}{n-1}=n^2-1.
$$
\item Among the weight vectors of weight $0$, there is one that is annulled by the action of $e_{i(i+1)}$, $1\leq i\leq (n-1)$. This weight vector is:
$$
v_{00\dots 00}=2n\displaystyle\sum_{1\leq i<j\leq n}e_{ij}.e_{ji}+\displaystyle\sum_{1\leq i<j\leq n}(e_{ii}-e_{jj}).(e_{ii}-e_{jj})
$$
We deduce the existence of a trivial simple module $\Gamma_{00\dots 00}$ of dimension $1$.
\end{itemize}
Therefore:
$$
S^2(\mathfrak{sl}(n,\R))\cong\left\{\aligned
&\Gamma_{20\dots 02}\oplus\Gamma_{10\dots 01} \oplus \Gamma_{010\dots 010}\oplus \Gamma_{0\dots 0},\quad&\text{if }n>3\\
&\\
&\Gamma_{22}\oplus\Gamma_{11} \oplus\Gamma_{00},\quad&\text{if }n=3.
\endaligned\right.
$$

since the dimensions, $\frac{n^2(n^2-1)}{2}=\frac{n^2(n-1)(n+3)}{4}+n^2-1+\frac{n^2(n+1)(n-3)}{4}+1$.

\subsubsection{Intertwining of $S^2(\mathfrak{sl}(n,\R))$}

\ 

Let $P_1$, $P_2$, $P_3$ and $P_4$ the intertwining defined from $S^2(\mathfrak{sl}(n,\R))$ in $(S^2(\mathfrak{sl}(n,\R)))^\star$, such that, for all $\xi$, $\eta\in \mathfrak{sl}(n,\R)$ and $X,~ Y\in \mathfrak{sl}(n,\R)$ :

\begin{itemize}
\item $\left\langle P_1(\xi.\eta), X.Y \right\rangle=Tr(\xi X\eta Y)+Tr(\xi Y\eta X)$,
\item $\left\langle P_2(\xi.\eta), X.Y \right\rangle=Tr(\xi X)Tr(\eta Y)+Tr(\xi Y)Tr(\eta X)$,
\item $\left\langle P_3(\xi.\eta), X.Y \right\rangle=Tr(\xi\eta XY)+Tr(\xi\eta YX)+Tr(\eta\xi XY)+Tr(\eta\xi YX)$,
\item $\left\langle P_4(\xi.\eta), X.Y \right\rangle=Tr(\xi\eta)Tr(XY)$.
\end{itemize} 

In particular, we have :
\begin{itemize}
\item $P_2(v_{20\dots02})=P_3(v_{20\dots02})=P_4(v_{20\dots02})=0$, and $\left\langle P_1(v_{20\dots02}), e_{n1}.e_{n1} \right\rangle=1\neq 0$,
\item $P_3(v_{010\dots010})=P_4(v_{010\dots010})=0$, and $\left\langle P_2(v_{010\dots010}), e_{n2}.e_{(n-1)1} \right\rangle=4\neq 0$,
\item $P_4(v_{10\dots01})=0$, and $\left\langle P_3(v_{10\dots01}), e_{n2}.e_{21}\right\rangle\neq 0$,
\item $P_4(v_{00\dots00})\neq0$.
\end{itemize}

Thus, if $n>3$, $P_1$, $P_2$, $P_3$ and $P_4$ are independent and, since the dimension of the space of intertwining of $S^2(\mathfrak{sl}(n,\R))$ is 4, then any intertwining $b_2$ is written :
$$
b_2=a_1P_1+a_2P_2+a_3P_3+a_4P_4, \quad\text{where $a_i$ are real constants}. 
$$

If $n=3$, $P_1$, $P_3$ and $P_4$ are independent and, since the dimension of the space of intertwining is three, then we can write :
$$
b_2=a_1P_1+a_3P_3+a_4P_4, \quad\text{where $a_i$ are real constants}. 
$$

\begin{rema}
First, recall that for $\mathfrak{gl}(n,\R)$, the forms $(A_1,\dots,A_m)\mapsto Tr(A_{i_1}\dots A_{i_k})$ are the only invariant functions which generate $K[End(V)^m]^{GL(V)}$ (see [H-C]). 

Remark that there are $24$ possible products of $4$ matrices, depending on the position of the matrix in the product. If we take the trace of these products, then there are only $6$ distinct forms, since, for all $A_1, A_2, A_3, A_4\in\mathfrak{sl}(n,\R)$:
$$
Tr(A_1A_2A_3A_4)=Tr(A_2A_3A_4A_1)=Tr(A_3A_4A_1A_2)=Tr(A_4A_1A_2A_3).
$$
Since we are looking here to build symmetric forms in $\xi$, $\eta$ and $X$, $Y$, there are only $4$ symmetric forms obtained as product of traces of product matrices. These forms are the $4$ forms described above.\\
\end{rema}


\subsection{$\mathfrak{sl}(n,\R)$ does not admit an overalgebra almost separating of degree 2}


\

We have seen if $\mathfrak{sl}(n,\R)$ admits an overalgebra almost separating of degree $2$, then $\mathfrak{sl}(n,\R)$  admits an overalgebra of the form
$$
\mathfrak{G}=(\mathfrak{sl}(n,\R)\rtimes S_2(\mathfrak{sl}(n,\R)),(\phi:\xi\mapsto b_1(\xi)+b_2(\xi\cdot\xi))),
$$
with $b_1=a_0P_0$ and $b_2=a_1P_1+\dots+a_4P_4$.\\

We assume that a such overalgebra almost separating $\mathfrak G$ exists.\\

The generic orbits $SL(n,\R)\cdot\xi$ are the orbits of the points $\xi$ of $\Omega$. Recall that :
$$
SL(n,\R)\cdot\xi\subset\{\xi'\in\mathfrak{sl}(n,\R),~~C_{\xi'}=C_\xi\}.
$$
Thus, for all $\xi$ in $\Omega$ and all $v\in S_2(\mathfrak{sl}(n,\R))$, we put $\zeta=\xi+^t\psi_v(\phi(\xi))$ such that $C_{\xi'}=C_\xi$.\\

\begin{lem}

\

If $\mathfrak G$ is an overalgebra almost separating for $\mathfrak{sl}(n,\R)$, then for all $\xi$ of $\mathfrak{sl}(n,\R)$ and all $v$ of $S_2(\mathfrak {sl}(n,\R))$, $\zeta=\xi+^t\psi_v(\phi(\xi))$ has the same caracteristic polynomial as $\xi$ and the same eigenvalues.\\
\end{lem}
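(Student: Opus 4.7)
The plan is to recognize $\zeta$ as the $\mathfrak{sl}(n,\R)^\star$-component of the coadjoint translate $Coad(e,v)\Phi(\xi)$, then exploit the intertwining condition 2 of the overalgebra to push $\zeta$ back onto a genuine $G$-coadjoint orbit of $\xi$, where the characteristic polynomial is automatically preserved. A density argument will finally extend the identity from $\Omega$ to the whole of $\mathfrak{sl}(n,\R)$.

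First I would apply the semi-direct-product coadjoint formula recalled above,
$$
Coad(g,v)(\xi,f)=(Coad(g)\xi+{}^t\psi_v(\pi^\star(g)f),\,\pi^\star(g)f),
$$
with $g=e$ and $f=\phi(\xi)$, which gives $Coad(e,v)\Phi(\xi)=(\zeta,\phi(\xi))$. Fix now $\xi\in\Omega$. By condition 2, $Coad(G^+)\Phi(\xi)=\Phi(Coad(G)\xi)$, so there exists $\xi'\in Coad(G)\xi$ with $\Phi(\xi')=(\xi',\phi(\xi'))=(\zeta,\phi(\xi))$. Matching first components yields $\zeta=\xi'\in Coad(G)\xi$. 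Under the trace-form identification $\mathfrak{sl}(n,\R)^\star\simeq\mathfrak{sl}(n,\R)$ this is nothing but the conjugation orbit of $\xi$, and therefore $C_\zeta=C_\xi$ for every $\xi\in\Omega$.

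Next I would promote this equality to all $\xi\in\mathfrak{sl}(n,\R)$ by continuity. Since $\phi$ is polynomial and $v\mapsto{}^t\psi_v$ is linear, the map $\xi\mapsto\zeta=\xi+{}^t\psi_v(\phi(\xi))$ is polynomial, and consequently each coefficient of $C_\zeta$ depends polynomially on $\xi$. Invoking the earlier lemma that $\Omega$ is dense in $\mathfrak{sl}(n,\R)$, the identity $C_\zeta=C_\xi$ extends from $\Omega$ to the whole algebra. Since two matrices with the same characteristic polynomial share the same multiset of eigenvalues, the assertion about eigenvalues is immediate.

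The only delicate step is the first identification of $\zeta$ with an element of $Coad(G)\xi$, which amounts to unpacking the intertwining condition and matching components in $\mathfrak g^{+\star}=\mathfrak g^\star\oplus V^\star$; once that bookkeeping is done the rest is routine. I do not anticipate any genuine obstacle here, since the lemma is essentially a reformulation of the fact that a displacement of $\Phi(\xi)$ purely in the $V$-direction of $\mathfrak G^+$ must project to a conjugation of $\xi$ inside $\mathfrak{sl}(n,\R)$.
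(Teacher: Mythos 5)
Your proposal is correct and follows essentially the same route as the paper: the identity $C_\zeta=C_\xi$ for $\xi\in\Omega$ comes from reading $\zeta$ as the $\mathfrak g^\star$-component of $Coad(e,v)\Phi(\xi)$ and invoking condition 2 (exactly the relation the paper derives in its Section 4 and recalls just before the lemma), and the extension to all of $\mathfrak{sl}(n,\R)$ is the same density-plus-continuity argument, since the coefficients of $\det(\xi+{}^t\psi_v(\phi(\xi))-\lambda I)$ depend polynomially on $\xi$ and $\Omega$ is dense. No gaps; you merely spell out the generic-orbit step that the paper takes as already established.
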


\begin{proof}
For any matrix $\xi$ of $\mathfrak {sl}(n,\R)$, and all $\varepsilon>0$, there exists $\xi_\varepsilon$ in $\Omega$ such that :
$$
\left\|\xi-\xi_\varepsilon\right\|<\varepsilon.
$$
Since $\xi_\varepsilon$ is in $\Omega$,
$$
\det(\xi_\varepsilon+^t\psi_v(\phi(\xi_\varepsilon))-\lambda I)=\det(\xi_\varepsilon-\lambda I), \quad\forall \lambda,~~\forall v
$$
If $\varepsilon$ tends to $0$, then, for all $\lambda$,
$$
\det(\xi+^t\psi_v(\phi(\xi))-\lambda I)=\det(\xi-\lambda I).
$$
\end{proof}

\begin{thm}

\

For $n>2$, $\mathfrak{sl}(n,\R)$ does not admit an overalgebra almost separating of degree $2$.\\
\end{thm}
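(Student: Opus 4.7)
The plan is to suppose that such an almost separating overalgebra $\mathfrak G=(\mathfrak{sl}(n,\R)\rtimes S_2(\mathfrak{sl}(n,\R)),\phi)$ of degree $2$ exists and derive a contradiction. By the corollary of the previous section we may take
\[
\phi(\xi)=a_0 P_0(\xi)+a_1 P_1(\xi\cdot\xi)+a_2 P_2(\xi\cdot\xi)+a_3 P_3(\xi\cdot\xi)+a_4 P_4(\xi\cdot\xi)
\]
(with $a_2=0$ when $n=3$), and set $A(v):={}^t\psi_v(\phi(\xi))$, viewed as an element of $\mathfrak{sl}(n,\R)$. The preceding lemma says that $\xi+A(v)$ has the same characteristic polynomial as $\xi$ for every $\xi\in\mathfrak{sl}(n,\R)$ and every $v\in S_2(\mathfrak{sl}(n,\R))$.

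The first step is to upgrade this into a nilpotency statement. Linearity $A(tv)=tA(v)$ turns $Tr((\xi+tA(v))^k)=Tr(\xi^k)$ into a polynomial identity in $t$ for each $k=2,\dots,n$, and the coefficient of $t^k$ on the left is exactly $Tr(A(v)^k)$. Combined with $Tr(A(v))=0$ (automatic, since $A(v)\in\mathfrak{sl}(n,\R)$) and Newton's identities, this forces $A(v)$ to be nilpotent for every $\xi$ and every $v$.

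Next I would compute $A(v)$ explicitly. For $v=Y\in\mathfrak{sl}(n,\R)\subset S_2(\mathfrak{sl}(n,\R))$ only the $a_0$-term contributes, giving $A(Y)=a_0[Y,\xi]$. For $v=Y\cdot Z\in S^2(\mathfrak{sl}(n,\R))$, cyclic invariance of the trace yields
\[
A(Y\cdot Z)=2a_1[Y\xi Z+Z\xi Y,\xi]+2a_2\bigl([Y,\xi]\,Tr(\xi Z)+[Z,\xi]\,Tr(\xi Y)\bigr)+2a_3[YZ+ZY,\xi^2],
\]
the $P_4$-contribution vanishing identically by $Tr(XYZ)=Tr(YZX)$. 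Nilpotency is then tested on two well-chosen configurations. Taking $\xi=D(c_1,\dots,c_n)$ diagonal with distinct eigenvalues and $Y=e_{12}+e_{21}$ in $S^1$ yields $A=a_0(c_2-c_1)(e_{12}-e_{21})$, whose spectrum $\{0,\pm i\}$ is not nilpotent, forcing $a_0=0$. Keeping the same $\xi$ and taking $Y=e_{11}-e_{nn}$, $Z=e_{12}+e_{21}$, a short direct computation produces
\[
A=2(c_2-c_1)\bigl(a_1 c_1+a_2(c_1-c_n)+a_3(c_1+c_2)\bigr)(e_{12}-e_{21}),
\]
so nilpotency forces the scalar factor to vanish as a polynomial in the independent variables $c_1,c_2,c_n$; matching coefficients gives $a_1+a_2+a_3=0$, $a_3=0$ and $a_2=0$, whence $a_1=a_2=a_3=0$ (for $n=3$ the $a_2$-term is simply absent and the same test still yields $a_1=a_3=0$).

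What remains is $\phi(\xi)=a_4 P_4(\xi\cdot\xi)=a_4\,Tr(\xi^2)\,J$, where $J\in(S^2\mathfrak{sl}(n,\R))^\star$ is the $G$-invariant element $\langle J,X\cdot W\rangle=Tr(XW)$. Since $J$ is $\pi^\star$-fixed we have ${}^t\psi_v(J)=0$ for every $v$, so the $G^+$-coadjoint orbit of $\Phi(\xi)$ reduces to $(Coad(G)\xi)\times\{a_4\,Tr(\xi^2)\,J\}$. The Proposition on convex hulls of generic orbits then gives
\[
\overline{Conv}(\Phi(Coad(G)\xi))=\mathfrak{sl}(n,\R)\times\{a_4\,Tr(\xi^2)\,J\},
\]
so two such hulls are equal if and only if $a_4\,Tr(\xi^2)=a_4\,Tr(\xi'^2)$: if $a_4=0$ every generic $\xi'$ qualifies, and if $a_4\neq 0$ the locus $\{\xi':Tr(\xi'^2)=Tr(\xi^2)\}$ is a codimension-one infinite family of generic orbits. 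Neither case is a finite set of orbits, contradicting the almost-separating property and completing the proof. The main obstacle will be the polarization computation giving the displayed formulas for $A_1,A_2,A_3$; the value of the test pair $Y=e_{11}-e_{nn}$, $Z=e_{12}+e_{21}$ is that all three contributions collapse to a scalar multiple of the single non-nilpotent matrix $e_{12}-e_{21}$, turning the nilpotency constraint into a transparent linear system in $(a_1,a_2,a_3)$.
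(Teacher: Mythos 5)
Your argument is correct, and its overall skeleton is the one the paper uses: reduce (via the corollary of Section 4--5) to $\phi(\xi)=a_0P_0(\xi)+(a_1P_1+\dots+a_4P_4)(\xi\cdot\xi)$, exploit the lemma that $\zeta=\xi+{}^t\psi_v(\phi(\xi))$ keeps the characteristic polynomial of $\xi$ to force $a_0=a_1=a_2=a_3=0$, and then show that the surviving term $a_4P_4(\xi\cdot\xi)=a_4\,Tr(\xi^2)\,J$ cannot almost separate because infinitely many generic orbits share one value of $Tr(\xi^2)$. Where you genuinely diverge is in the middle step. The paper works with nilpotent test matrices ($\xi=e_{1n}$, $\xi=e_{1n}+e_{2(n-1)}$, $\xi=e_{1(n-1)}+e_{(n-1)n}$, always with $v=X\cdot X$, $X={}^t\xi$) and compares characteristic polynomials one configuration at a time, extracting $a_0=0$, then $4a_1+4a_2=0$, $4a_1+8a_2=0$, then $a_3=0$. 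You instead (i) upgrade the invariance of the characteristic polynomial to the statement that $A(v)={}^t\psi_v(\phi(\xi))$ is nilpotent, by replacing $v$ with $tv$ and reading off the top $t$-coefficients of $Tr((\xi+tA)^k)$ together with Newton's identities, and (ii) polarize to general $v=Y\cdot Z$ and test with a diagonal $\xi$ and the single pair $Y=e_{11}-e_{nn}$, $Z=e_{12}+e_{21}$, so that all three contributions collapse onto $e_{12}-e_{21}$ and the constraint becomes one linear identity in $(c_1,c_2,c_n)$ killing $a_1,a_2,a_3$ at once (with the $a_2$-term absent for $n=3$, exactly as in the paper's parametrization). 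I checked your polarized formula for $A(Y\cdot Z)$ against the paper's formula for $v=X\cdot X$ and your explicit scalar $2(c_2-c_1)\bigl(a_1c_1+a_2(c_1-c_n)+a_3(c_1+c_2)\bigr)$; both are correct. Your route buys a cleaner, more uniform elimination (one test instead of three, and a reusable nilpotency criterion), at the price of the polarization computation; the paper's route is more pedestrian but entirely explicit. In the final step the paper is slightly more concrete than you are: it exhibits an explicit family $\xi_t$ with $Tr(\xi_t^2)$ constant and $\det(\xi_t)$ non-constant, whereas you assert that the level set of $Tr(\xi^2)$ contains infinitely many generic orbits; that assertion is true for $n>2$ (vary the higher invariants $T_3,\dots,T_n$ at fixed $T_2$), but if you want the proof fully self-contained you should display such a family, as the paper does.
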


\begin{proof}

\

We have seen if $\mathfrak{sl}(n,\R)$ has an overalgebra almost separating of degree $2$, there exists an overalgebra $\mathfrak G$, with
$$
\phi(\xi)=a_0P_0(\xi)+(a_1P_1+\dots+a_4P_4)(\xi\cdot\xi).
$$ 

We will show that, if for all $\xi$ in $\Omega$, and all $v$ in $S_2(\mathfrak{sl}(n,\R))$, $\xi$ and $\zeta=\xi+^t\psi_v(\phi(\xi))$ have the same eigenvalues, then $a_0=a_1=\dots=a_3=0$, and that the function $\phi(\xi)=a_4P_4(\xi\cdot\xi)$ does not separate the coadjoint orbits of $\mathfrak{sl}(n,\R)$, if $n>2$.\\

Taking first $v=U\in\mathfrak{sl}(n,\R)$. Then $\left\langle ^t\psi_U(\phi(\xi)), X\right\rangle=a_0Tr(\xi[X, U])$, and $\zeta=\xi+a_0[U,\xi]$.\\

Let $U=e_{n1}+e_{(n-1)2}$ and $\xi=e_{1n}+e_{2(n-1)}$, thus 
$$
\zeta=a_0(-e_{11}-e_{22}+e_{(n-1)(n-1)}+e_{nn})+e_{1n}+e_{2(n-1)}
$$
and
$$
\det(\zeta-\lambda I)=(-\lambda)^{n-4}(\lambda^2-a_0^2)^2.
$$ 
Therefore, $\zeta$ has the same spectrum as $\xi$ implies $a_0=0$.\\

Put now $v=X.X$, then a direct calculation gives :
$$
^t\psi_{X.X}(\phi(\xi))=\left\{\aligned
&4a_1[X, \xi X \xi]+4a_2Tr(\xi X)[X,\xi]+4a_3[X^2, \xi^2],~~\text{ if }n>3,\\
&4a_1[X, \xi X \xi]+4a_3[X^2, \xi^2],~~\text{ if }n=3.\endaligned\right.
$$

Choose $\xi=e_{1n}$ and $X=e_{n1}$. $\xi$ and $X$ are nilpotent matrices : $X^2=\xi^2=0$ and we obtain :
$$
\xi X=e_{11},\quad [X, \xi X\xi]=e_{nn}-e_{11},\quad Tr(\xi X)=1,\quad [X, \xi]=e_{nn}-e_{11},
$$
thus 
$$
^t\psi_{X.X}(\phi(\xi))=\left\{\aligned&(4a_1+4a_2)(-e_{11}-e_{22}+e_{(n-1)(n-1)}+e_{nn}),~~\text{ if }n>3,\\
&4a_1(e_{33}-e_{22}),~~\text{ if }n=3,\endaligned\right.
$$ 
and, if we note $\zeta=\xi+^t\psi_{X.X}(\phi(\xi))$, then
$$
\det(\zeta-\lambda I)=\left\{\aligned&(-\lambda)^{n-2}(\lambda^2-(4a_1+4a_2)^2),~~\text{ if }n>3,\\
&-\lambda(\lambda^2-(4a_1)^2),~~\text{ if }n=3.\endaligned\right.
$$
Since $\det(\xi-\lambda I)=(-\lambda)^n$, then we deduce that $4a_1+4a_2=0$ if $n>3$ and $a_1=0$ if $n=3$.\\

Suppose now $n>3$. We choose $\xi=e_{1n}+e_{2(n-1)}$ and $X=^t\xi=e_{n1}+e_{(n-1)2}$. These matrices are nilpotent and we obtain 
$$\aligned
\xi^2&=0,\quad &\xi X&=e_{11}+e_{22},\\
\xi X\xi&=e_{1n}+e_{2(n-1)},\quad &[X, \xi X\xi]&=-e_{11}-e_{22}+e_{(n-1)(n-1)}+e_{nn},\\
Tr(\xi X)&=2,\quad &[X, \xi]&=-e_{11}-e_{22}+e_{(n-1)(n-1)}+e_{nn}.
\endaligned
$$
Therefore
$$
^t\psi_{X.X}(\phi(\xi))=(4a_1+8a_2)(-e_{11}-e_{22}+e_{(n-1)(n-1)}+e_{nn})
$$ 
and
$$
\det(\zeta-\lambda I)=(-\lambda)^{(n-4)}(\lambda^2-(4a_1+8a_2)^2)^2.
$$
Since $\det(\xi-\lambda I)=(-\lambda)^n$, we deduce that $4a_1+8a_2=0$. Thus, in all cases, $a_1=a_2=0$.\\

Choose now $\xi=e_{1(n-1)}+e_{(n-1)n}$ and $X=^t\xi=e_{(n-1)1}+e_{n(n-1)}$, then
$$
X^2=e_{n1},\quad \xi^2=e_{1n},\quad [X^2, \xi^2]=-e_{11}+e_{nn}.
$$ 
Therefore
$$
^t\psi_{X^2}(\phi(\xi))=4a_3(-e_{11}+e_{nn})
$$
and
$$
\det(\zeta-\lambda I)=(-\lambda)^{(n-2)}(\lambda^2-(4a_3)^2).
$$
Hence, since $\det(\xi-\lambda I)=(-\lambda)^n$, then $a_3=0$.\\

Thus, we deduce that :
$$
\phi(\xi)=b_2(\xi.\xi)=a_4P_4(\xi.\xi)\quad\text{ or }\quad \left\langle \phi(\xi), U+X.Y\right\rangle=a_4Tr(\xi^2)Tr(XY).
$$
But the overalgebra $\mathfrak G$ is not separating, since, for all $t$ in $[0,1]$, and all matrix $M=D(c_4,\dots,c_n)\in\mathfrak{sl}(n-3,\R)$, with $|c_k|>2$, we define the matrix
$$
\xi_t=\left(\begin{matrix}\frac{1}{2}(t+ \sqrt{4-3t^2})&&&\\ & \frac{1}{2}(t-\sqrt{4-3t^2})&&\\ &&-t&\\ &&&M\end{matrix}\right).
$$
For all $t$, 
$$
\xi_t\in \Omega,~~\det(\xi_t)=t(1-t^2)\prod_k c_k~  \text{ and }~Tr(\xi_t^2)=2+\sum_k c_k^2.
$$
i.e, for all $t$,
$$
\overline{Conv}\left(\Phi(Coad SL(n,\R)\xi_t)\right)=\mathfrak{sl}(n,\R)^\star\times\{U+X\cdot Y\mapsto a_4(2+\sum_k c_k^2)Tr(XY)\},
$$
therefore if $t\neq\frac{1}{\sqrt{3}}$,
$$
Coad ~SL(n,\R)(\xi_t)\neq Coad~ SL(n,\R)(\xi_{\frac{1}{\sqrt{3}}}).
$$
Since $t(1-t^2)< \frac{1}{\sqrt{3}}(1-\frac{1}{3})=\frac{2}{3\sqrt{3}}$, $\det(\xi_t)\neq\det(\xi_{\frac{1}{\sqrt{3}}})$, thus $\xi_t$ is not in the orbit $Coad~SL(n,\R)\xi_{\frac{1}{\sqrt{3}}}$.\\
\end{proof}

\begin{rema}

\

Recall that, if $n=2$, the overalgebra $(\mathfrak{sl}(2,\R)\rtimes\R,[\xi\mapsto Tr(\xi^2)])$ is an overalgebra almost separating of degree $2$ of $\mathfrak{sl}(2,\R)$ (cf. \cite{ASZ} where we use the function $\det(\xi)=- \frac{1}{2}Tr(\xi^2)$).\\

Similarly, $\mathfrak{sl}(3,\R)$ does not admit an overalgebra almost separating of degree $2$ but $\mathfrak{sl}(3,\R)$ admits an overalgebra almost separating of degree $3$.\\

\end{rema}

In this following section, we will show that $\mathfrak{sl}(4,\R)$ does not admit an overalgebra almost separating of degree 2 or 3 but it admits one overalgebra almost separating of degree 4.\\


\section{The case $n=4$ and $p=3$}


\

As above, we shall first find the explicit decomposition of $S^3(\mathfrak{sl}(4,\R))$.\\


\subsection{Decomposition of $S^3(\mathfrak{sl}(4,\R))$}


\
 
We have seen that the module $S^3(\mathfrak{sl}(4,\R))$ is self dual. Then, if the submodule $\Gamma_{a_1a_2a_3}$ appears in the decomposition of $S^3(\mathfrak{sl}(4,\R))$, the submodule $\Gamma_{a_3a_2a_1}\simeq (\Gamma_{a_1a_2a_3})^s$ appears also.\\

The module $S^3(\mathfrak{sl}(4,\R))$ is a submodule of $S^2(\mathfrak{sl}(4,\R))\otimes \mathfrak{sl}(4,\R)$. The decomposition of $S^2(\mathfrak{sl}(4,\R))\otimes \mathfrak{sl}(4,\R)$ is given by Littlewood-Richardson's rule ({\sl{cf.}} \cite{FH}), as follows :
$$\aligned
S^2(\mathfrak{sl}(4,\R))\otimes \mathfrak{sl}(4,\R)
&=(\Gamma_{303}+\Gamma_{212}+\Gamma_{202}+\Gamma_{101})+(\Gamma_{121}+\Gamma_{202}+\Gamma_{101}+\Gamma_{311}+\Gamma_{113})\\
&\hskip 0.5cm +3(\Gamma_{210}+\Gamma_{012})+2\Gamma_{020}+3\Gamma_{101}+\Gamma_{000}.
\endaligned
$$

The highest weight vectors which appear in $S^2(\mathfrak{sl}(4,\R))$ are $v_{202}$, $v_{020}$, $v_{101}$ and $v_{000}$. We deduce that there are 4 highest weight vectors in $S^3(\mathfrak{sl}(4,\R))$ which are $w_{303}=v_{202}.e_{14}$, $w_{121}=v_{020}.e_{14}$, $w_{202}=v_{101}.e_{14}$ and $w_{101}=v_{000}.e_{14}$. These vectors are the highest weight vectors for the simple modules $\Gamma_{303}$, $\Gamma_{121}$, $\Gamma_{202}$ and $\Gamma_{101}$.\\

In $S^2(\mathfrak{sl}(4,\R))\otimes \mathfrak{sl}(4,\R)$, the highest weight vectors  $v_{020}\otimes e_{14}-v_{020}.e_{14}$, $v_{101}\otimes e_{14}-v_{101}.e_{14}$ and $v_{000}\otimes e_{14}-v_{000}.e_{14}$ appear also. The corresponding simple modules of these vectors are, respectively, $\Gamma_{121}$, $\Gamma_{202}$ and $\Gamma_{101}$. Since these vectors are not symmetric, then their corresponding modules are not submodules of $S^3(\mathfrak{sl}(4,\R))$.\\

The highest weight vector of $\Gamma_{311}$ is $e_{14}\otimes e_{13}\otimes e_{14}-e_{14}.e_{13}.e_{14}$ which is not symmetric, then $\Gamma_{311}$ does not appear in $S^3(\mathfrak{sl}(4,\R))$, and $\Gamma_{113}$ does not appear also.

We conclude:
$$
\Gamma_{303}+\Gamma_{121}+\Gamma_{202}+\Gamma_{101}\subset S^3(\mathfrak{sl}(4,\R)).
$$
The additional invariant space of $(\Gamma_{303}+\Gamma_{212}+\Gamma_{202}+\Gamma_{101})$ in $S^3(\mathfrak{sl}(4,\R))$ has the following decomposition, by using the dimensions :
$$
S^3(\mathfrak{sl}(4,\R))/(\Gamma_{303}+\Gamma_{212}+\Gamma_{202}+\Gamma_{101})=(\Gamma_{210}+\Gamma_{012})+\Gamma_{101}+\Gamma_{000}.
$$
Therefore :
$$
S^3(\mathfrak{sl}(4,\R))=(\Gamma_{303}+\Gamma_{212}+\Gamma_{202}+\Gamma_{101})+(\Gamma_{210}+\Gamma_{012}+\Gamma_{101}+\Gamma_{000}).
$$

The highest weight vectors  $w_{303}$, $w_{121}$, $w_{202}$ and $w_{101}$ are:
$$\aligned
w_{303}&=e_{14}.e_{14}.e_{14},\\
w_{121}&=e_{24}.e_{13}.e_{14}-e_{23}.e_{14}.e_{14},\\
w_{202}&=e_{12}.e_{24}.e_{14}+e_{13}.e_{34}.e_{14}+\frac{1}{2}((e_{11}-e_{22})-(e_{33}-e_{44})).e_{14}.e_{14},\\
w_{101}&=8(e_{12}.e_{21}.e_{14}+e_{13}.e_{31}.e_{14}+e_{14}.e_{41}.e_{14}+e_{23}.e_{32}.e_{14}+e_{34}.e_{43}.e_{14})+\\
&\hskip 0.5cm+3(e_{11}.e_{11}.e_{14}+e_{22}.e_{22}.e_{14}+e_{33}.e_{33}.e_{14}+e_{44}.e_{44}.e_{14})-\\
&\hskip 0.5cm-2(e_{11}.e_{22}.e_{14}+e_{11}.e_{33}.e_{14}+e_{11}.e_{44}.e_{14}+e_{22}.e_{33}.e_{14}+e_{22}.e_{44}.e_{14}+e_{33}.e_{44}.e_{14}).
\endaligned
$$

Now looking for the highest weight vectors of the four remaining simple modules.\\

By Littlewood-Richardson's rule, $(\Gamma_{210}+\Gamma_{012})$ appears in the tensorial product $\Gamma_{020}\otimes\Gamma_{101}$ where $\Gamma_{020}$ is in $S^2(\mathfrak{sl}(4,\R))$, and $\Gamma_{101}$ is in $\mathfrak{sl}(4,\R)$.\\

The highest weight vector of the module $\Gamma_{020}$ is :
$$
v_{020}=e_{24}.e_{13}-e_{23}.e_{14}.
$$

We deduce also two other vectors of $\Gamma_{020}$ given by :
$$\aligned
ad_{e_{42}}v_{020}&=((e_{44}-e_{22}).e_{13}-e_{43}.e_{14}+e_{23}e_{12}),\\
ad_{e_{32}} v_{020}&=(e_{34}.e_{13} -e_{24}.e_{12}-(e_{33}-e_{22}).e_{14}).
\endaligned
$$

Thus, there is a highest weight vector of $\Gamma_{210}$, defined by :
$$
w_{210}=e_{12}.e_{24}.e_{13}-e_{12}.e_{23}.e_{14}-e_{14}.e_{43}.e_{14}+e_{13}.e_{34}.e_{13}+(e_{44}-e_{33}).e_{13}.e_{14}.
$$
$w_{210}$ is a non zero vector and its weight is $4L_1+2L_2+2L_3=2\omega_1+2\omega_3$. Indeed :
$$
ad_{e_{12}} w_{210}=0,\quad ad_{e_{23}} w_{210}=0~~~\text{ and }~~ad_{e_{34}} w_{210}=0.
$$

Using the application $s$, the highest weight vector of the module $\Gamma_{012}$ is $v_{210}^s$ or :
$$
w_{012}=e_{34}.e_{13}.e_{24}-e_{34}.e_{23}.e_{14}-e_{14}.e_{21}.e_{14}+e_{24}.e_{12}.e_{24}+(e_{11}-e_{22}).e_{24}.e_{14}.
$$

It remains the modules $\Gamma_{101}$ and $\Gamma_{000}$ which appear in the tensorial product $\Gamma_{101}\otimes\Gamma_{101}$. The first factor is in $S^2(\mathfrak{sl}(4,\R))$, the second is in $\mathfrak{sl}(4,\R)$.\\

There is a basis for the first factor defined by the following vectors :
$$
e'_{ij}=e_{i1}.e_{1j}+e_{i2}.e_{2j}+e_{i3}.e_{3j}+e_{i4}.e_{4j}-\frac{1}{2}(e_{11}+e_{22}+e_{33}+e_{44}).e_{ij}.
$$

In $S^2(\mathfrak{sl}(4,\R))\subset\Gamma_{101}\otimes\Gamma_{101}$, we have seen that the corresponding highest weight vectors are:
$$\aligned
v_{101}&=e_{12}.e_{24}+e_{13}.e_{34}+\frac{1}{2}((e_{11}-e_{22})-(e_{33}-e_{44})).e_{14},\\
v_{000}&=8\sum_{1\leq i<j\leq 4}e_{ij}.e_{ji}+\sum_{1\leq i<j\leq 4}(e_{ii}-e_{jj}).(e_{ii}-e_{jj}).
\endaligned
$$
By replacing the first factor $e_{ij}$ by the factor $e'_{ij}$, we obtain the highest weight vectors $w'_{101}$ and $w_{000}$ ( $w_{000}$ is not developed) :
$$\aligned
w'_{101}&=2e_{12}(2e_{23}.e_{34}+2e_{21}.e_{14}+(e_{22}-e_{11}).e_{24}-(e_{33}-e_{44}).e_{24})+\\
&\hskip 0.5cm +2e_{13}(2e_{32}.e_{24}+2e_{31}.e_{14}+(e_{33}-e_{11}).e_{34}+(e_{44}-e_{22}).e_{34})+\\
&\hskip 0.5cm +(e_{11}-e_{22})(2e_{12}.e_{24}+2e_{13}.e_{34}+(e_{11}-e_{22}).e_{14}-\\
&\hskip 0.5cm -(e_{33}-e_{44}).e_{14})-(e_{33}-e_{44})(2e_{12}.e_{24}+2e_{13}.e_{34}+(e_{11}-e_{22}).e_{14}-(e_{33}-e_{44}).e_{14}),\\
w_{000}&=4(e_{12}.e'_{21}+e'_{12}.e_{21}+e_{13}.e'_{31}+e'_{13}.e_{31}+e_{14}.e'_{41}+e'_{14}.e_{41}+\\
&\hskip 0.5cm +e_{23}.e'_{32}+e'_{23}.e_{32}+e_{24}.e'_{42}+e'_{24}.e_{42}+e_{34}.e'_{43}+e'_{34}.e_{43})+\\
&\hskip 0.5cm +(e_{11}-e_{22})(e'_{11}-e'_{22})+(e_{11}-e_{33})(e'_{11}-e'_{33})+(e_{11}-e_{44})(e'_{11}-e'_{44})+\\
&\hskip 0.5cm +(e_{22}-e_{33})(e'_{22}-e'_{33})+(e_{22}-e_{44})(e'_{22}-e'_{44})+(e_{33}-e_{44})(e'_{33}-e'_{44}).
\endaligned
$$

\


\subsection{Trace forms and intertwining of $S^3(\mathfrak{sl}(4,\R))$}


\

As for $S^2(\mathfrak{sl}(n,\R)$, we know 12 trace forms. Denote by $\xi$, $\eta$ and $\zeta$ elements in $\left(\mathfrak{sl}(4,\R)\right)^\star=\mathfrak{sl}(4,\R)$, and $X$, $Y$, $Z$ elements in $\mathfrak{sl}(4,\R)$. The trace forms are the following :
$$\begin{array}{rlrlrl}
T_1&=Tr(\xi\eta\zeta XYZ),&T_2&=Tr(\xi\eta X\zeta YZ),&T_3&=Tr(\xi\eta XY\zeta Z),\\
T_4&=Tr(\xi X\eta Y\zeta Z),&T_5&=Tr(\xi\eta\zeta X)Tr(YZ),&T_6&= Tr(\xi\eta XY)Tr(\zeta Z),\\
T_7&=Tr(\xi XYZ)Tr(\eta \zeta ),& T_8&=Tr(\xi X\eta Y)Tr(\zeta Z),& T_9&=Tr(\xi\eta\zeta)Tr( XYZ),\\
T_{10}&=Tr(\xi\eta X)Tr( \zeta YZ),& T_{11}&=Tr(\xi\eta)Tr(\zeta X)Tr(YZ),& T_{12}&=Tr(\xi X)Tr(\eta Y)Tr(\zeta Z).
\end{array}
$$

Recall that, in the previous section, we calculated the 8 highest weight vectors of the decomposition of $S^3(\mathfrak{sl}(4,\R))$, i.e the free system
$$
(w^1,\dots,w^8)=(w_{303}, w_{121}, w_{202}, w_{210}, w_{012}, w_{101}, w'_{101}, w_{000}).
$$

Let $M$ the matrix with 8 rows and 12 columns whose entries are the numbers $\langle T_i(w^k),(w^k)^t\rangle$ ($i=1,\dots, 12$, $k=1,\dots,8$) where the vector $e_{j_1i_1}.e_{j_2i_2}.e_{j_3i_3}$ of $S^3(\mathfrak{sl}(4,\R))$ is noted $(e_{i_1j_1}.e_{i_2j_2}.e_{i_3j_3})^t$ .\\

We obtain, by using a symbolic computation program, the following matrix:
$$
M=\left(\begin{array}{cccccccccccc}
0& 0& 0& 36& 0& 0& 0& 36& 0&  0&  0&  36  \\
0& 0& 0& 0& 0&  0&  0&  4&  0&   0&  0&  12\\
0& 1& 1&  3&  0&  1&  0&  4&  0&   1&  0&  6\\
0&  0&   4&   0&  0&  4&  0&   4&  0&  0&  0&  6\\
0&  1&  0& 0&  0&  1&  0&  2&  0&   0&  0&  6\\
1&   1&  2&   0&  2&  3&  2&  2&  0&   0&  4&  6\\
1&   0&  0&   0&  0&  2&  0&  0&  0&   0&  0&  6\\
3&   0&  0&   0&  0&  3&  0&  0&  9&   0&  0&  6
\end{array}\right)
$$
The rank of this matrix is $8$.\\

We extract the columns 1, 2, 3, 4, 5, 8, 10, 9, so we obtain the following intertwining. Explicitly :
$$
\aligned
\left\langle P_1(\xi\eta\zeta),   XYZ\right\rangle&=\text{Sym}(Tr(\xi\eta\zeta XYZ)),\\
\left\langle P_2(\xi\eta\zeta),   XYZ\right\rangle&=\text{Sym}(Tr(\xi\eta X\zeta YZ)),\\
\left\langle P_3(\xi\eta\zeta),   XYZ\right\rangle&=\text{Sym}(Tr(\xi\eta X Y\zeta Z)),\\
\left\langle P_4(\xi\eta\zeta),  XYZ\right\rangle&=\text{Sym}(Tr(\xi X\eta Y\zeta Z)),\\
\left\langle P_5(\xi\eta\zeta),  XYZ\right\rangle&=\text{Sym}(Tr(\xi \eta \zeta X)Tr(YZ)),\\
\left\langle P_6(\xi\eta\zeta),  XYZ\right\rangle&=\text{Sym}(Tr(\xi X\eta Y)Tr(\zeta Z)),\\
\left\langle P_7(\xi\eta\zeta),  XYZ\right\rangle&=\text{Sym}(Tr(\xi\eta X)Tr( \zeta YZ)),\\
\left\langle P_8(\xi\eta\zeta),  XYZ\right\rangle&=\text{Sym}(Tr(\xi\eta\zeta)Tr( XYZ)).\\
\endaligned
$$ 
The notation '\text{Sym}' means that the expression is symmetrical in $\xi$, $\eta$, $\zeta$.\\

If $N$ is the sub-matrix of $M$, with 8 rows and 8 columns whose entries are $\langle P_i(w^k),(w^k)^t\rangle$, $i=1,\dots,8$, then
$$
N=\left(\begin{array}{cccccccccccc}
0& 0& 0& 36&  0& 36&  0& 0  \\
0& 0& 0& 0 &  0&  4&  0& 0  \\
0& 1& 1& 3 &  0&  4&  1& 0  \\
0& 0& 4& 0 &  0&  4&  0& 0  \\
0& 1& 0& 0 &  0&  2&  0& 0  \\
1& 1& 2& 0 &  2&  2&  0& 0  \\
1& 0& 0& 0 &  0&  0&  0& 0  \\
3& 0& 0& 0 &  0&  0&  0& 9
\end{array}\right)
$$
The rank of this matrix is also 8. Thus $(P_1, P_2, P_3, P_4, P_5, P_6, P_7, P_8)$ are independent. Therefore:

\begin{lem}

\

The applications $P_i:S^3(\mathfrak{sl}(4,\R)^\star)~\longrightarrow~\left(S^3(\mathfrak{sl}(4,\R))\right)^\star$ defined above, form a basis of the space of intertwining of the module $S^3(\mathfrak{sl}(4,\R))$.\\

\end{lem}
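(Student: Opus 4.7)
The plan is to combine two ingredients: invariant theory, which produces a finite spanning set of intertwining operators, and a direct rank computation, which counts the independent ones. I would first invoke the first fundamental theorem of invariant theory, as cited in the preceding remark for $\mathfrak{gl}(n,\R)$: every $SL(4,\R)$-invariant multilinear form of degree $(3,3)$ on $\mathfrak{sl}(4,\R) \times \mathfrak{sl}(4,\R)$ is a linear combination of products of traces of monomials in the six matrix arguments $\xi, \eta, \zeta, X, Y, Z$. Symmetrizing over $(\xi,\eta,\zeta)$ and over $(X,Y,Z)$ and collapsing via cyclic invariance of the trace, one checks that the twelve forms $T_1, \ldots, T_{12}$ listed above are exhaustive. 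Hence these 12 forms span the full space of intertwinings from $S^3(\mathfrak{sl}(4,\R))$ to its dual.

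Next, I would evaluate each of the 12 forms on each of the 8 highest weight vectors $w^1 = w_{303}, \ldots, w^8 = w_{000}$ computed in the preceding subsection, pairing $w^k$ with the transpose $(w^k)^t$ obtained by swapping $e_{ij}$ and $e_{ji}$. This produces the $8 \times 12$ matrix $M$ displayed in the statement. The substance of the argument is the symbolic verification that $\text{rank}(M) = 8$, combined with the observation that the $8 \times 8$ sub-matrix $N$ built from the columns indexed by $P_1, \ldots, P_8$ also has rank 8. The first fact says that the 12 spanning forms collapse under the evaluation pairing to an 8-dimensional space, hence (by the faithfulness of this pairing on the intertwining space) the intertwining space itself has dimension at most 8. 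The second fact says that $P_1, \ldots, P_8$ themselves evaluate to 8 linearly independent vectors in $\R^8$ and are therefore linearly independent as intertwinings. Together these give a basis.

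The main obstacle is the explicit symbolic computation of all 96 entries of the matrix $M$: each entry requires expanding a trace form against a multi-term highest weight vector, and the vectors $w'_{101}$ and $w_{000}$ are especially large, so a computer algebra package is practically required, as the paper itself notes. The remaining conceptual ingredient --- that evaluation on the highest weight vectors is faithful on the intertwining space (an intertwining is determined by its action on a highest weight vector of each isotypic component) --- together with the spanning result from invariant theory closes the argument, reducing the claim to the matrix computation.
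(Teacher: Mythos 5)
Your first two steps coincide with the paper's own argument: the appeal to classical invariant theory to produce the twelve trace forms, and the symbolic computation of $M$ and of the $8\times 8$ submatrix $N$, whose rank $8$ does prove that $P_1,\dots,P_8$ are linearly independent. The genuine gap is in your third step, the upper bound ``$\dim\leq 8$''. First, $\mathrm{rank}(M)=8$ cannot bound the span of $T_1,\dots,T_{12}$ from above: $M$ has only eight rows, so its rank is at most $8$ automatically, and the computation only gives a \emph{lower} bound on that span. Second, the faithfulness you invoke is false. An intertwining $P:S^3(\mathfrak{sl}(4,\R))\to\bigl(S^3(\mathfrak{sl}(4,\R))\bigr)^\star$ is indeed determined by the eight vectors $P(w^k)$, but the matrix records only the eight scalars $\langle P(w^k),(w^k)^t\rangle$, which is strictly less information. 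In fact, by Schur's lemma applied to the decomposition obtained earlier in this very section, $S^3(\mathfrak{sl}(4,\R))\cong\Gamma_{303}\oplus\Gamma_{121}\oplus\Gamma_{202}\oplus 2\,\Gamma_{101}\oplus\Gamma_{210}\oplus\Gamma_{012}\oplus\Gamma_{000}$, the space of equivariant maps into the dual has dimension $1+1+1+2^2+1+1+1=10$: the adjoint occurs with multiplicity two (contributing $4$), and $\Gamma_{210}$, $\Gamma_{012}$ are exchanged by duality (contributing $2$). No linear evaluation into $\R^8$ can be injective on a $10$-dimensional space; concretely, on the isotypic block $2\,\Gamma_{101}$ the intertwinings form a four-parameter family of which your evaluation sees only the two numbers $\langle P(w_{101}),(w_{101})^t\rangle$ and $\langle P(w'_{101}),(w'_{101})^t\rangle$.

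The consequence is that your proposal establishes exactly what the paper's rank computation establishes, namely independence, but not the spanning half of the lemma, and the spanning half cannot be obtained by this route: since, as you correctly argue, the symmetrized trace forms span \emph{all} invariant pairings, their span is the full $10$-dimensional space, and eight of them cannot be a basis of it. The paper is equally silent here; its implicit count ``eight irreducible summands, hence an eight-dimensional intertwining space'' is valid only for multiplicity-free, self-dual decompositions (as in the $S^2$ case treated before), and fails for $S^3(\mathfrak{sl}(4,\R))$ precisely because of $2\,\Gamma_{101}$ and the dual pair $\Gamma_{210}\oplus\Gamma_{012}$. Nor can one rescue the statement by reading ``intertwining'' as ``symmetric invariant pairing'' (that space is $8$-dimensional): the selected family is not contained in it, since transposing the two arguments exchanges $P_2$ and $P_3$, so $P_2-P_3$ is a nonzero skew pairing in their span. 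Any correct proof would have to either restrict the class of intertwinings under consideration and check the $P_i$ belong to it, or enlarge the family beyond eight elements; the faithfulness argument as you state it does not close the gap.
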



\subsection{$\mathfrak {sl}(4,\R)$ does not admit an overalgebra almost separating of degree 3}


\

\begin{thm}

\

The algebra $\mathfrak{sl}(4,\R)$ does not admit an overalgebra almost separating of degree 3.
\end{thm}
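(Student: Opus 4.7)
The plan is to run the same grading argument as in Theorem 5.6, but one grade higher. Assume for contradiction that $\mathfrak{sl}(4,\R)$ admits an overalgebra almost separating of degree $3$. By Corollary 5.3 together with the explicit descriptions in Sections 6.1--6.2, we may take
$$
\phi(\xi) = a_0\,P_0(\xi) + b_2(\xi\cdot\xi) + b_3(\xi\cdot\xi\cdot\xi),
$$
where $b_2 = a_1 P_1 + a_2 P_2 + a_3 P_3 + a_4 P_4$ is the generic intertwining of $S^2(\mathfrak{sl}(4,\R))$ and $b_3 = \sum_{j=1}^{8} c_j\,Q_j$ is the generic intertwining of $S^3(\mathfrak{sl}(4,\R))$, the $Q_j$ denoting the basis produced in Section 6.2 (renamed to avoid clash with the degree-$2$ symbols $P_i$). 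By Lemma 5.5, for all $\xi\in\mathfrak{sl}(4,\R)$ and all $v\in V=S_3(\mathfrak{sl}(4,\R))$ the perturbed matrix $\zeta=\xi+{}^t\psi_v(\phi(\xi))$ must have the same characteristic polynomial as $\xi$.

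Because the action of $\mathfrak{sl}(4,\R)$ on $V$ preserves the grading, picking $v$ in the degree-$k$ summand of $V$ isolates the contribution of $b_k$ to ${}^t\psi_v(\phi(\xi))$. Hence the argument of the proof of Theorem 5.6 applies verbatim in degrees $1$ and $2$: testing $v\in\mathfrak{sl}(4,\R)$ yields $a_0=0$, and testing $v=X\cdot X$ with the same nilpotent pairs used there kills $a_1,a_2,a_3$, leaving $b_2=a_4 P_4$.

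The genuinely new step is degree $3$. Taking $v=X\cdot X\cdot X\in S^3(\mathfrak{sl}(4,\R))$, one has $\psi_v(Z)=3\,[Z,X]\cdot X\cdot X$, hence
$$
\langle\,{}^t\psi_v(\phi(\xi)),\,Z\rangle = 3\sum_{j=1}^{8} c_j\,\bigl\langle Q_j(\xi\cdot\xi\cdot\xi),\,[Z,X]\cdot X\cdot X\bigr\rangle,
$$
which by the formulas of Section 6.2 expands into an explicit linear combination of trace monomials in $\xi$, $X$ and $[Z,X]$. I would then feed in a short list of nilpotent pairs $(\xi,X)$ supported in $2\times 2$ or $3\times 3$ corners of the ambient $4\times 4$ matrix, prolonging the test matrices of the degree-$2$ argument (e.g.\ $\xi=e_{14}+e_{23}$ with $X={}^t\xi$, and higher analogues); for each such pair $\zeta$ becomes block-triangular and its characteristic polynomial factors by inspection. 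Demanding $\det(\zeta-\lambda I)=(-\lambda)^4$ for every test pair produces a linear system in the eight unknowns $c_j$, expected to force every $c_j$ attached to a non-scalar-trace symbol to vanish, so that $b_3(\xi\cdot\xi\cdot\xi)$ collapses to a polynomial in $T_2(\xi)$ and $T_3(\xi)$.

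At that point $\phi(\xi)$ depends on $\xi$ only through $T_2(\xi)$ and $T_3(\xi)$, and the final contradiction is obtained exactly as at the end of Theorem 5.6: since $\R[\mathfrak{sl}(4,\R)]^{SL(4,\R)}=\R[T_2,T_3,T_4]$ with $T_2,T_3,T_4$ algebraically independent, one produces a one-parameter family $\xi_t\in\Omega$ with $T_2(\xi_t)$ and $T_3(\xi_t)$ constant but $T_4(\xi_t)$ varying (for instance, sliding the four real eigenvalues along the variety $\{\sum c_i=0,\ \sum c_i^2=\alpha,\ \sum c_i^3=\beta\}$). Each member yields a distinct coadjoint orbit while $\overline{\text{Conv}}\bigl(\Phi(\text{Coad}(SL(4,\R))\xi_t)\bigr)$ is constant, contradicting almost-separation. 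The main obstacle is the degree-$3$ bookkeeping in the middle step: the eight pairings $\langle Q_j(\xi\cdot\xi\cdot\xi),[Z,X]\cdot X\cdot X\rangle$ have bulky expansions, so the test pairs $(\xi,X)$ must be chosen so that the eight resulting constraints on the $c_j$ are linearly independent and, \emph{crucially}, so that the only surviving intertwining correspond to the trivial and adjoint isotypic components of $S^3(\mathfrak{sl}(4,\R))$ identified in Section 6.1; exploiting this isotypic structure \emph{a priori} is the cleanest route past the combinatorial cost.
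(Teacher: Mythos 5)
Your skeleton is the paper's: reduce via the intertwining corollary to $\phi(\xi)=a_0P_0(\xi)+b_2(\xi\cdot\xi)+b_3(\xi\cdot\xi\cdot\xi)$, use the lemma that $\zeta=\xi+{}^t\psi_v(\phi(\xi))$ must keep the characteristic polynomial of $\xi$ for \emph{all} $\xi$ and $v$, isolate each graded piece of $\phi$ by choosing $v$ in the corresponding summand of $S_3(\mathfrak{sl}(4,\R))$ (your formula $\psi_{X\cdot X\cdot X}(Z)=3[Z,X]\cdot X\cdot X$ is right), kill all coefficients except those of $Tr(\xi^2)Tr(XY)$ and $Tr(\xi^3)Tr(XYZ)$, and conclude with a one-parameter family in $\Omega$ on which $T_2$ and $T_3$ are constant but $T_4$ varies. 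The reduction of the degree-$1$ and degree-$2$ parts and the final family (your "slide the eigenvalues along $\sum c_i=0$, $\sum c_i^2=\alpha$, $\sum c_i^3=\beta$" is exactly the paper's $\xi_t$ with eigenvalues $\pm\sqrt{1+t},\pm\sqrt{1-t}$) are fine.

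The genuine gap is the middle step, which you yourself flag as "the main obstacle" and leave as an expectation: you never show that any concrete set of test pairs forces $c_1=\dots=c_7=0$. That elimination is the actual content of the theorem's proof. The paper first computes ${}^t\psi_{X\cdot X\cdot X}(P_j(\xi\cdot\xi\cdot\xi))$ in closed form for each of the eight basis intertwinings (e.g. $[X^3,\xi^3]$ for $P_1$, $3[X\xi X\xi X,\xi]$ for $P_4$, $Tr(X^2)[X,\xi^3]$ for $P_5$, etc.), and then chooses pairs $(\xi,X)$ adapted to each term: $\xi=e_{14}$ with $X=e_{14}\pm e_{41}$ gives $c_6-c_4=0$ and $c_6+c_4=0$; $\xi=e_{13}+e_{34}$, $X={}^t\xi$ gives $c_2=c_3=0$; $\xi=e_{13}+e_{14}+e_{34}$, $X={}^t\xi$ gives $c_7=0$; the principal nilpotent $\xi=e_{12}+e_{23}+e_{34}$ with $X={}^t\xi$ and with $X=e_{14}+e_{41}$ gives $c_1=0$ and $c_5=0$. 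Your proposed class of tests is too coarse as stated: for your sample pair $\xi=e_{14}+e_{23}$, $X={}^t\xi$ one has $\xi^2=X^2=0$, so $P_1,P_2,P_3,P_5,P_7$ contribute nothing and the only relation obtained is $c_4+2c_6=0$; reaching $c_1$, $c_5$, $c_7$ requires test elements with $\xi^2\neq0$ and $\xi^3\neq0$, which "corner-supported" or "prolonged degree-$2$" pairs do not automatically supply. Likewise, the suggested a priori shortcut via the isotypic decomposition of $S^3(\mathfrak{sl}(4,\R))$ is not substantiated: the constraint "same characteristic polynomial for all perturbations" is not an equivariance statement that visibly kills the nontrivial isotypic components, and the paper does not obtain it that way. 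So the plan would work, but as written the decisive linear relations on the $c_j$ are asserted, not proved.
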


\begin{proof}

\

We have seen that if $\mathfrak{sl}(4,\R)$ admits an overalgebra of degree 3, then $\mathfrak{sl}(4,\R)$ admits an overalgebra of the form
$$
(\mathfrak{sl}(4,\R)\rtimes S_3(\mathfrak{sl}(4,\R)),(b_1+b_2+b_3)\circ\tau).
$$
In this case, $b_i$ are an intertwining, $b_1=0$, $\langle b_2(\xi.\eta),X.Y\rangle=aTr(\xi\eta)Tr(XY)$ and $b_3$ is written :
$$
b_3(\xi.\eta.\zeta)=\sum_{j=1}^8c_j P_j(\xi.\eta.\zeta).
$$

Then, we choose $v=X.X.X$ in $S_3(\mathfrak{sl}(4,\R))$ and we calculate $^t\psi_v( P_j(\xi.\xi.\xi))$. Explicitly:
$$\aligned
^t\psi_{v}(P_1(\xi.\xi.\xi))&=[X^3, \xi^3],\\
^t\psi_{v}(P_2(\xi.\xi.\xi))&=[X^2\xi^2X, \xi]+[X\xi X^2, \xi^2],\\
^t\psi_{v}(P_3(\xi.\xi.\xi))&=[X^2\xi X, \xi^2]+[X\xi^2 X^2, \xi],\\
^t\psi_{v}(P_4(\xi.\xi.\xi))&=3[X\xi X\xi X, \xi],\\
^t\psi_{v}(P_5(\xi.\xi.\xi))&=Tr(X^2)[X, \xi^3],\\
^t\psi_{v}(P_6(\xi.\xi.\xi))&=2Tr(\xi X)[X\xi X, \xi]+Tr(\xi X\xi X)[X, \xi],\\
^t\psi_{v}(P_7(\xi.\xi.\xi))&=Tr(\xi X^2)[X, \xi^2] +Tr(\xi^2 X)[X^2, \xi],\\
^t\psi_{v}(P_8(\xi.\xi.\xi))&=0.
\endaligned
$$

Let $\xi=e_{14}$, then $\xi^2=0$ and $^t\psi_{v}(P_j(\xi.\xi.\xi))=0$, for $j=1$, 2, 3, 5, 7.\\

Let now $X=e_{14}+e_{41}$, then $\xi X=e_{11}$ and $X\xi X \xi X=X-\xi=e_{41}$.\\

So, we obtain $^t\psi_{v}(P_4(\xi.\xi.\xi))=-3(e_{11}-e_{44})$ and $^t\psi_{v}(P_6(\xi.\xi.\xi))=3(e_{11}-e_{44})$. Thus, with same notations as above, 
$$
\zeta=\xi+^t\psi_{v}(\phi(\xi))=3(c_6-c_4)(e_{11}-e_{44})+e_{14}.
$$ 
Therefore, 
$$
\det(\zeta-\lambda I)=\lambda^2(\lambda^2-9(c_6-c_4)^2).
$$
We deduce the relation $c_6-c_4=0$.\\

On the other hand, let $X=e_{14}-e_{41}$, then :
$$
\begin{array}{rlrlrl}
\xi X&=-e_{11},&  \xi X\xi X&=e_{11},&  X\xi X\xi X&=-e_{41},\cr
X\xi X&=e_{41},& [X\xi X\xi X, \xi]&=e_{11}-e_{44},&  X\xi X&=e_{41},\cr
[X\xi X,\xi]&=-e_{11}+e_{44},&  [X, \xi]&=e_{11}-e_{44},&  \xi X\xi X&=e_{11}.
\end{array}
$$

Thus, we get $^t\psi_{v}(P_6(\xi.\xi.\xi))=3(e_{11}-e_{44})$ and $^t\psi_{v}(P_4(\xi.\xi.\xi))=3(e_{11}-e_{44})$. Therefore, if $\zeta=\xi+^t\psi_{v}(\phi(\xi))$,
$$
\det(\zeta-\lambda I)=\lambda^2(\lambda^2-9(c_6+c_4)^2)
$$
then $c_6+c_4=0$. This shows that $c_6=c_4=0$.\\

We choose now $\xi=e_{13}+e_{34}$, and $X=^t\xi=e_{31}+e_{43}$. Then:
$$\begin{array}{rlrlrlrl}
X^2&=e_{41}, &X^2\xi^2X&=e_{43}, &[\xi, X^2\xi^2X]&=e_{33}-e_{44}, &X\xi X^2&=e_{41},\cr
[\xi^2, X\xi X^2 ]&=e_{11}-e_{44}, &X^2\xi X&=e_{41}, &[\xi^2, X^2\xi X]&=e_{11}-e_{44}, &X\xi^2 X^2&=e_{31},\cr
[\xi, X\xi^2X^2]&=e_{11}-e_{33}, &X^2\xi^2X&=e_{43}, &[X^2\xi^2X, \xi]&=e_{44}-e_{33}, &X\xi X^2&=e_{41},\cr
[X\xi X^2, \xi^2]&=e_{44}-e_{11}.&&&&&
\end{array}
$$
We deduce that $^t\psi_{v}(P_2(\xi.\xi.\xi))=e_{11}+e_{33}-2e_{44}$ and $^t\psi_{v}(P_3(\xi.\xi.\xi))=2e_{11}-e_{33}-e_{44}$. Therefore, 
$$
\zeta=\xi+^t\psi_{v}(\phi(\xi))=e_{13}+e_{34}+(c_2+2c_3)e_{11}+(c_2-c_3)e_{33}-(2c_2+c_3)e_{44}
$$
and
$$
\det(\zeta-\lambda I)=-\lambda(c_2+2c_3-\lambda)(c_2-c_3-\lambda)(-2c_2-c_3-\lambda)
$$
Hence, the spectrum of $\zeta$ is the same as $\xi$, i.e $\{0\}$ implies $c_2+2c_3=0$, $c_2-c_3=0$, and $2c_2+c_3=0$, so $c_2=c_3=0$.\\ 

Now, let $\xi=e_{13}+e_{14}+e_{34}$ and $X=^t\xi=e_{31}+e_{41}+e_{43}$, then
$$\begin{array}{rlrlrl}
\xi^2&=e_{14}, &X^2&=e_{41}, &\xi X^2&=e_{11}+e_{31},\cr
\xi^2X&=e_{11}+e_{13}, &[\xi^2, X]&=e_{11}+e_{13}-e_{34}-e_{44}, &[\xi, X^2]&=e_{11}+e_{31}-e_{43}-e_{44}.
\end{array}
$$
Thus,
$$
^t\psi_{v}(P_7(\xi.\xi.\xi))=2e_{11}+e_{13}+e_{31}-e_{34}-e_{43}-2e_{44}
$$
and, if $\zeta=\xi+^t\psi_{v}(\phi(\xi))$,
$$
\det(\zeta-\lambda I)=-\lambda(-\lambda^3+\lambda(5c_7^2+c_7)+2c_7^3+c_7^2+2c_7).
$$
Therefore, the spectrum of $\zeta$ is the same as $\xi$, i.e $\{0\}$ implies $c_7=0$.\\

Later, we choose $\xi=e_{12}+e_{23}+e_{34}$ and $X=^t\xi$. Then, $\xi^2=e_{13}+e_{24}$, $\xi^3=e_{14}$, $X^3=e_{41}$. Thus $^t\psi_{v}(P_1(\xi.\xi.\xi))=e_{11}-e_{44}$ and the spectrum of $\zeta=\xi+^t\psi_{v}(\phi(\xi))$ is $\{0\}$ implies $c_1=0$.\\

Finally, we choose another $X=e_{14}+e_{41}$ and we allowed $\xi=e_{12}+e_{23}+e_{34}$. Then $X^2=e_{11}+e_{44}$ and $\xi^3=e_{11}-e_{44}$. Therefore, $^t\psi_{v}(P_5(\xi.\xi.\xi))=2(e_{11}-e_{44})$ and $\det(\zeta-\lambda I)=\lambda^4$ implies $c_5=0$.\\

We finally get:
$$
\langle\phi(\xi),U+X.Y+X'.Y'.Z'\rangle=a_4Tr(\xi^2)Tr(XY)+c_8Tr(\xi^3)Tr(X'Y'Z').
$$

But we consider, for $0<t<1$, the matrices
$$
\xi_t=\left(\begin{matrix} \sqrt{1+t}&&&\\ &-\sqrt{1+t}&&\\&& \sqrt{1-t}&\\ &&& - \sqrt{1-t} \end{matrix}\right).
$$

$\xi_t$ is an element of $\Omega$ for all $t$, $Tr(\xi_t^2)=4$ and $Tr(\xi_t^3)=0$ for all $t$. Although, $\det(\xi_t)=(1-t^2)^2$. Therefore, with the same argument as in a previous section, we have, for all $t$,
$$
\overline{Conv}\left(\Phi(Coad~SL(4,\R)\xi_t)\right)=\left(\mathfrak{sl}(4,\R)\right)^\star\times\{U+X.Y+X'.Y'.Z'\mapsto 4a_4Tr(XY)\}.
$$
But, if $t\neq\frac{1}{2}$, $\xi_t$ is not in the orbit $Coad~SL(4,\R)\xi_{\frac{1}{2}}$.

Thus, $\mathfrak{sl}(4,\R)$ does not admit an overalgebra almost separating of degree 3.\\

\end{proof}

In fact, we think that the following conjecture is always true :

\

\begin{conj}

For all $n$, $\mathfrak{sl}(n,\R)$ does not admit an overalgebra almost separating of degree $n-1$, but it admits an overalgebra almost separating of degree $n$.\\

More generally, if $\mathfrak g$ is a real and deployed semi simple Lie algebra and if $k$ is the greatest degree of the generators of the algebra of invariant functions on $\mathfrak g$, then $\mathfrak g$ admits an overalgebra almost separating of degree $k$. But $\mathfrak g$ does not admit an overalgebra almost separating of degree $k-1$.

\end{conj}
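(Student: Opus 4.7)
The plan is to attack both halves of the conjecture by extending the techniques already developed in the paper. For the positive part (existence of an overalgebra almost separating of degree $k$), the natural candidate is $\mathfrak g^+=\mathfrak g\rtimes\R^r$ with $\Phi(\xi)=(\xi,I_{d_1}(\xi),\dots,I_{d_r}(\xi))$, where $I_{d_1},\dots,I_{d_r}$ are homogeneous generators of the ring of $Ad(G)$-invariant polynomials on $\mathfrak g$, with $d_r=k$. Conditions~1 and~2 follow tautologically from the invariance of the $I_{d_j}$'s. For condition~3 one reduces to showing that, for generic $\xi$ in the regular semisimple locus $\Omega_{\mathfrak g}\subset\mathfrak g$, the closed convex hull $\overline{Conv}(G\cdot\xi)$ is dense in $\mathfrak g$; then $\overline{Conv}(\Phi(Coad(G)\xi))=\mathfrak g^\star\times\{(I_{d_1}(\xi),\dots,I_{d_r}(\xi))\}$, and two such hulls coincide iff $\xi,\xi'$ lie in the same fibre of the Chevalley quotient map, which, by the same type of spectral analysis as in Section~3, is a finite union of generic orbits.

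For the negative part, suppose by contradiction that $\mathfrak g$ admits an overalgebra almost separating of degree $k-1$. By the corollary of Section~4, one may take it in the form $(\mathfrak g\rtimes S_{k-1}(\mathfrak g),(b_1+\dots+b_{k-1})\circ\tau)$ with each $b_j$ an intertwining operator of $S^j(\mathfrak g)$. The key lemma, proved exactly as for $\mathfrak{sl}(n,\R)$ with $p=2$, says that for every $\xi\in\mathfrak g$ and every $v\in S_{k-1}(\mathfrak g)$, the element $\zeta=\xi+{}^t\psi_v(\phi(\xi))$ has the same characteristic polynomial as $\xi$. One then decomposes each $S^j(\mathfrak g)$ into simple $\mathfrak g$-modules, produces a basis of intertwinings from symmetrized trace forms $Tr(\xi^{a_1}X^{b_1}\xi^{a_2}X^{b_2}\dots)$ and products thereof, and, as in the non-existence proofs of Sections~5 and~6, exhibits explicit test pairs $(\xi,X)$ (typically nilpotent or near-nilpotent) that isolate the contribution of each intertwining coefficient to the characteristic polynomial of $\zeta$, forcing each such coefficient to vanish except those corresponding to the purely scalar invariants $I_{d_j}$ with $d_j\leq k-1$. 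The surviving $\phi$ factors through the lower invariants, and an explicit one-parameter family $\xi_t\in\Omega_{\mathfrak g}$ on which all invariants of degree $<k$ are constant while the top invariant $I_k$ varies—which exists by algebraic independence of the generators—yields the required contradiction to condition~3, exactly as in the use of $\xi_t$ in Theorems~5.3 and~6.2.

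The main obstacle is twofold. First, on the positive side, the density $\overline{Conv}(G\cdot\xi)=\mathfrak g$ for generic $\xi$ is proved in Proposition~3.4 by an inductive decomposition using the block embedding $\mathfrak{sl}(2)\oplus\mathfrak{sl}(n-2)\hookrightarrow\mathfrak{sl}(n)$ together with the base case in $\mathfrak{sl}(2,\R)$; for the other simple types this block is unavailable, so one would need analogous root-subalgebra reductions adapted to each Dynkin diagram, or a form of Kostant's convexity theorem. Second, on the negative side, the classification of intertwinings of $S^{k-1}(\mathfrak g)$ grows rapidly with $k-1$ and with the rank of $\mathfrak g$: already $S^3(\mathfrak{sl}(4,\R))$ requires eight independent trace forms, each coefficient being killed by its own ad-hoc test pair. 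A more conceptual argument—perhaps a representation-theoretic characterization of the $G$-equivariant maps $\mathfrak g\to S_{k-1}(\mathfrak g)^\star$ whose differential preserves the Chevalley fibration—would very likely be needed to replace the combinatorial computations by a uniform proof covering all deployed semisimple types and all degrees simultaneously.
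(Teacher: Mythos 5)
The statement you are trying to prove is stated in the paper as a \emph{conjecture}: the paper offers no proof of it, and indeed the body of the paper only establishes the degree-$n$ existence for $\mathfrak{sl}(n,\R)$, the non-existence in degree $2$ for $\mathfrak{sl}(n,\R)$, and the non-existence in degree $3$ for $\mathfrak{sl}(4,\R)$. Your text is an extension programme rather than a proof, and the gaps you yourself flag at the end are exactly the points where the argument is missing, not merely technical. On the positive side, your reduction of condition~3 to the density of $\overline{Conv}(G\cdot\xi)$ in $\mathfrak g$ for generic $\xi$ is the right reduction, but that density is only proved in the paper for $\mathfrak{sl}(n,\R)$, by an induction whose engine is the explicit block decomposition through $\mathfrak{sl}(2,\R)\oplus\mathfrak{sl}(n-2,\R)$ and the known $n=2,3,4$ cases; no analogue is provided (or known to you) for the other split types, and Kostant-type convexity statements concern projections to a Cartan subalgebra, not the full convex hull of the orbit, so they do not deliver what is needed. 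You would also have to show that a regular fibre of the Chevalley map over $\R$ contains only finitely many adjoint orbits for every deployed $\mathfrak g$; this is true but is asserted, not argued, in your sketch.

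On the negative side the gap is more serious. The paper's non-existence proofs are not instances of a general mechanism: they rest on (i) an explicit determination of \emph{all} intertwinings of $S^j(\mathfrak g)$ for the specific pairs $(\mathfrak{sl}(n,\R),j\le 2)$ and $(\mathfrak{sl}(4,\R),j\le 3)$, obtained from the plethysm decompositions and a rank computation on trace forms, and (ii) hand-picked nilpotent test pairs $(\xi,X)$ that kill each non-scalar coefficient one at a time. Your claim that ``one then decomposes each $S^j(\mathfrak g)$ \dots forcing each such coefficient to vanish except those corresponding to the purely scalar invariants'' is precisely the content of the conjecture in disguise: for general $\mathfrak g$ and general $j\le k-1$ you have neither a spanning set of intertwinings (the trace-form description is special to $\mathfrak{sl}(n)$ and its defining representation) nor a reason why suitable test elements always exist, and even for $\mathfrak{sl}(n,\R)$ in degree $n-1$ with $n>4$ no such elimination has been carried out. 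The only step of your outline that does generalize cheaply is the lemma that $\zeta=\xi+{}^t\psi_v(\phi(\xi))$ has the same characteristic polynomial (or invariants) as $\xi$, by density of the regular set, and the final one-parameter family argument once $\phi$ is known to factor through the low-degree invariants. As it stands, therefore, your proposal reproduces the paper's strategy but does not close the conjecture; the missing ideas are a type-independent proof of the convex-hull density and a conceptual, uniform description of the equivariant maps $\mathfrak g^\star\to S_{k-1}(\mathfrak g)^\star$ compatible with preservation of the spectrum, replacing the case-by-case computations.
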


The hypothesis `$\mathfrak g$ deployed' is necessary. Indeed, we remark that $\mathfrak{sl}(2,\R)$ does not admit an overalgebra almost separating of degree 1, but the Lie algebra $\mathfrak{su}(2)$ admits an overalgebra almost separating of degree 1 since its adjoint orbits are spheres which are characterized by the closure of their convex hull.\\


\end{document}